\let\expandafter\oldproof\csname\string\proof\endcsname
\let\oldendproof\endproof
\renewenvironment{proof}[1][\proofname]{%
	\oldproof[\bf #1]%
}{\oldendproof}
\theoremstyle{plain}
\newtheorem{theorem}{Theorem}[section]
\newtheorem{lemma}[theorem]{Lemma}
\newtheorem{claim}[theorem]{Claim}
\newtheorem{proposition}[theorem]{Proposition}
\newtheorem{observation}[theorem]{Observation}
\newtheorem{corollary}[theorem]{Corollary}
\newtheorem{remark}[theorem]{Remark}
\newtheorem{definition}[theorem]{Definition}
\newcommand{\C}{\mathcal C}
\newcommand{\F}{\mathcal F}
\newcommand{\R}{\mathcal R}
\newcommand{\B}{\mathcal B}
\newcommand{\G}{\mathcal G}
\renewcommand{\P}{\mathcal P}
\newcommand{\sG}{\mathscr{G}}
\newcommand{\HH}{\mathcal{H}}
\newcommand{\sS}{\mathcal{S}}
\newcommand{\sC}{\mathscr{C}}
\newcommand{\mand}{\; \text{and}\;}
\DeclareMathOperator{\Pro}{\mathbb{P}}
\DeclareMathOperator{\VAR}{{\bf Var}}
\DeclareMathOperator{\COV}{{\bf Cov}}
\DeclareMathOperator{\Ex}{\mathbb{E}}
\newcommand{\floor}[1]{\left\lfloor #1 \right\rfloor}
\newcommand{\ceil}[1]{\left\lceil #1 \right\rceil}
\newcommand{\Gnp}{\mathbb{G}(n,p)}
\def\rainbow{\stackrel{\mathrm{rbw}}{\longrightarrow}}
\def\notrainbow{\stackrel{\mathrm{rbw}}{\longarrownot\longrightarrow}}
\newcommand{\extends}[2]{\stackrel[\scriptscriptstyle #1]{\hookrightarrow}{} #2}
\def\moverlay{\mathpalette\mov@rlay}
\def\mov@rlay#1#2{\leavevmode\vtop{%
   \baselineskip\z@skip \lineskiplimit-\maxdimen
   \ialign{\hfil$\m@th#1##$\hfil\cr#2\crcr}}}
\newcommand{\charfusion}[3][\mathord]{
    #1{\ifx#1\mathop\vphantom{#2}\fi
        \mathpalette\mov@rlay{#2\cr#3}
      }
    \ifx#1\mathop\expandafter\displaylimits\fi}
\newcommand{\discup}{\charfusion[\mathbin]{\cup}{\cdot}}
\let\eps=\varepsilon
\let\theta=\vartheta
\let\rho=\varrho
\let\phi=\varphi
\renewcommand*{\eqref}[1]{%
  \hyperref[{#1}]{\textup{\tagform@{\ref*{#1}}}}%
}
\definecolor{myMaroon}{HTML}{720E0E}
\definecolor{myBlue}{HTML}{1A5276}
\definecolor{myDarkerBlue}{HTML}{154360}
\definecolor{my_blue}{HTML}{4A5DE2}
\definecolor{my_purple}{HTML}{9013FE}
\definecolor{my_red}{HTML}{D0021B}
\definecolor{my_cyan}{HTML}{2CA78B}
\definecolor{my_green}{HTML}{417505}
\definecolor{my_yellow}{HTML}{F5A623}
\definecolor{1}{HTML}{000000}
\definecolor{2}{HTML}{000000}
\definecolor{6}{HTML}{000000}
\definecolor{4}{HTML}{000000}
\definecolor{7}{HTML}{000000}
\definecolor{3}{HTML}{000000}
\definecolor{5}{HTML}{000000}
\tikzset{colour1/.style={
        color = 1,
    }
}
\tikzset{colour1/.style={
        color = 1,
    }
}\tikzset{colour2/.style={
        color = 2,
    }
}\tikzset{colour3/.style={
        color = 3,
    }
}\tikzset{colour4/.style={
        color = 4,
        densely dotted
    }
}\tikzset{colour5/.style={
        color = 5,
        densely dashed
    }
}\tikzset{colour6/.style={
        color = 6,
        densely dashdotted
    }
}\tikzset{colour7/.style={
        color = 7,
        dash pattern=on 3mm off 1mm
    }
}
    \def\@fnsymbol#1{\ensuremath{\ifcase#1\or *\or \mathsection\or \ddagger\or
       \dagger\or \mathparagraph\or \|\or **\or \dagger\dagger
       \or \ddagger\ddagger \else\@ctrerr\fi}}
\title{Large rainbow cliques in randomly perturbed dense graphs}
\author{
	Elad Aigner-Horev \thanks{Department of Computer Science, Ariel University, Ariel 40700, Israel. Email: {\tt horev@ariel.ac.il}.} 
	\and 
	Oran Danon \thanks{Department of Computer Science, Ariel University, Ariel 40700, Israel. Email: {\tt oran.danon@msmail.ariel.ac.il}.}
	\and 
	Dan Hefetz \thanks{Department of Computer Science, Ariel University, Ariel 40700, Israel. Email: {\tt danhe@ariel.ac.il}. Research supported by ISF grant 822/18.}
	\and 
	Shoham Letzter \thanks{Department of Mathematics and Mathematical Statistics, University of Cambridge, Wilberforce Road, Cambridge CB3 0WB, UK. Email: {\tt s.letzter@dpmms.cam.ac.uk}. Research supported by the Royal Society.}
}
\begin{document}

\clearpage\maketitle
\thispagestyle{empty}

\begin{abstract}
	For two graphs $G$ and $H$, write $G \rainbow H$ if $G$ has the property that every {\sl proper} colouring of its edges yields a {\sl rainbow} copy of $H$.
	We study the thresholds for such so-called {\sl anti-Ramsey} properties in randomly perturbed dense graphs, which are unions of the form $G \cup \mathbb{G}(n,p)$, where $G$ is an $n$-vertex graph with edge-density at least $d$, and $d$ is a constant that does not depend on $n$. 
	 
	Our results in this paper, combined with our results in a companion paper, determine the threshold for the property $G \cup \Gnp \rainbow K_s$ for every $s$. In this paper, we show that for $s \geq 9$ the threshold is $n^{-1/m_2(K_{\ceil{s/2}})}$; in fact, our $1$-statement is a supersaturation result. This turns out to (almost) be the threshold for $s=8$ as well, but for every $4 \leq s \leq 7$, the threshold is lower; see our companion paper for more details.

Also in this paper, we determine that the threshold for the property $G \cup \Gnp \rainbow C_{2\ell - 1}$ is $n^{-2}$ for every $\ell \geq 2$; in particular, the threshold does not depend on the length of the cycle $C_{2\ell - 1}$. For even cycles, and in fact any fixed bipartite graph, no random edges are needed at all; that is $G \rainbow H$ always holds, whenever $G$ is as above and $H$ is bipartite.

\end{abstract}

\section{Introduction}

	A {\sl random perturbation} of a fixed $n$-vertex graph $G$, denoted by $G \cup \mathbb{G}(n,p)$, is a distribution over the supergraphs of $G$. The elements of such a distribution are generated via the addition of randomly sampled edges to $G$. These random edges are taken from the binomial random graph with edge-probability $p$, namely $\mathbb{G}(n,p)$. The fixed graph $G$ being {\sl perturbed} or {\sl augmented} in this manner is referred to as the {\em seed} of the {\em perturbation} (or {\em augmentation}) $G \cup \mathbb{G}(n,p)$. 

	The above model of randomly perturbed graphs was introduced by Bohman, Frieze, and Martin~\cite{BFM03}, who allowed the seed $G$ to range over the family of $n$-vertex graphs with minimum degree at least $\delta n$, which we denote here by $\G_{\delta, n}$. In particular, they discovered the phenomenon that for every $\delta > 0$, there exists a constant $C(\delta) > 0$ such that $G \cup \mathbb{G}(n,p)$ asymptotically almost surely (henceforth a.a.s.\ for brevity) admits a Hamilton cycle, whenever $p: = p(n) \geq C(\delta)/n$ and $G \in \G_{\delta,n}$. Note that the value of $p$ attained by their result is smaller by a logarithmic factor than that required for the emergence of Hamilton cycles in $\mathbb{G}(n,p)$. That is, while $G$ itself might not be Hamiltonian, making it Hamiltonian requires far fewer random edges than the number of random edges which typically form a Hamilton cycle by themselves. The notation $\G_{\delta,n} \cup \mathbb{G}(n,p)$ then suggests itself to mean the collection of perturbations arising from the members of $\G_{\delta,n}$ for a prescribed $\delta > 0$. 

	Several strands of results regarding the properties of randomly perturbed (hyper)graphs can be found in the literature. One prominent such strand can be seen as an extension of the results found in~\cite{BFM03} pertaining to the Hamiltonicity of $\G_{\delta,n} \cup \mathbb{G}(n,p)$. Indeed, the emergence of various spanning configurations in randomly perturbed (hyper)graphs was studied, for example, in~\cite{BTW17, BHKM18, BHKMPP18, BMPP18, DRRS18, HZ18, KKS16, KKS17, MM18}.

	Another prominent line of research regarding random perturbations concerns Ramsey properties of $\sG_{d,n} \cup \mathbb{G}(n,p)$, where here $\sG_{d,n}$ stands for the family of $n$-vertex graphs with edge-density at least $d > 0$, and $d$ is a constant. This strand stems from the work of Krivelevich, Sudakov, and Tetali~\cite{KST}. This line of research is heavily influenced by the now fairly mature body of results regarding the thresholds of various Ramsey properties in random graphs. 

	The study of Ramsey properties in random graphs was initiated by \L{}uczak, Ruci\'nski, and Voigt~\cite{LRV92}. The so-called {\sl symmetric edge Ramsey problem} for random graphs was settled completely by R\"odl and Ruci\'nski in a series of papers~\cite{RR93,RR94,RR95} that collectively established the so-called {\sl symmetric Random Ramsey theorem}. The best known consequence of this theorem is that for every integer $r \geq 2$ and every graph $H$ containing a cycle, there exist constants $c:= c(r,H)$ and $C:=C(r,H)$ such that 
	\begin{equation}\label{eq:sym-random-Ramsey}
	\lim_{n \to \infty} \Pro[\mathbb{G}(n,p) \to (H)_r] = 
	\begin{cases}
	0, & \;\text{if $p \leq c n^{-1/m_2(H)}$},\\
	1, & \;\text{if $p \geq C n^{-1/m_2(H)}$}.
	\end{cases}
	\end{equation}
	Here, $G \rightarrow (H)_r$ is the classical arrow notation used in Ramsey theory to denote that the graph $G$ has the property that every $r$-edge-colouring of $G$ admits a monochromatic copy of $H$. Having the same configuration $H$ sought in every colour, lends this type of results the title of being {\sl symmetric}. The parameter $m_2(H)$ is the so-called {\em maximum $2$-density} of $H$ given by 
	$$
	m_2(H) := \max \left \{\frac{e(F)-1}{v(F)-2} : F \subseteq H, e(F) \geq 2 \right\}.
	$$
	%$$
	%m_2(H) := \max \{d_2(F) : F \subseteq H, e(F) \geq 1\}, \; \textrm{ where } \; d_2(F) := 
	%\begin{cases}
	%\frac{e(F)-1}{v(F)-2}, & \;\text{if}\; v(F) >2\\
	%1/2, & \;\text{if}\; v(F) =2.
	%\end{cases}
	%$$
	The symmetric random Ramsey theorem determines the threshold for the property $\mathbb{G}(n,p) \to (H)_r$ for all $H$, i.e., also in the case that $H$ does not contain a cycle; see, e.g.~\cite[Theorem~1]{GNPSSH17} for a complete and accurate formulation of this result.
	An alternative short proof of the $1$-statement appearing 
	in~\eqref{eq:sym-random-Ramsey} was recently provided by Nenadov and Steger~\cite{NS16} utilising the so-called {\sl container method}~\cite{BMS15,ST15}.  
	 
	As noted above, Krivelevich, Sudakov, and Tetali~\cite{KST} were the first to study Ramsey properties of random perturbations. In particular, they proved that for every real $d > 0$, integer $t \geq 3$, and graph $G \in \sG_{d,n}$, the perturbation $G \cup \mathbb{G}(n,p)$ a.a.s.\ satisfies the property $G \cup \mathbb{G}(n,p) \rightarrow (K_3, K_t)$, whenever $p:= p(n) = \omega(n^{-2/(t-1)})$; moreover, this bound on $p$ is asymptotically best possible. Here, the notation $G \rightarrow (H_1, \ldots, H_r)$ is used to denote that $G$ has the {\sl asymmetric} Ramsey property asserting that any $r$-edge-colouring of $G$ admits a colour $i \in [r]$ such that $H_i$ appears with all its edges assigned the colour $i$. 

	Recently, the aforementioned result of Krivelevich, Sudakov, and Tetali~\cite{KST} has been significantly extended by Das and Treglown~\cite{DT19} and also by Powierski~\cite{Powierski19}. In particular, there is now a significant body of results pertaining to the property $G \cup \mathbb{G}(n,p) \rightarrow (K_r,K_s)$ for any pair of integers $r,s \geq 3$, whenever $G \in \sG_{d,n}$ for constant $d > 0$. Further in this direction, the work of Das, Morris, and Treglown~\cite{DMT19} extends the results of Kreuter~\cite{K96} pertaining to {\sl vertex Ramsey} properties of random graphs into the perturbed model. 

	Krivelevich, Sudakov, and Tetali~\cite{KST} studied additional properties of $\sG_{d,n} \cup \mathbb{G}(n,p)$. In particular, they studied the so-called {\sl containment} problem of small prescribed graphs in such perturbations. The emergence of complete graphs of fixed size in this model was studied earlier by Bohman, Frieze, Krivelevich and Martin~\cite{BFKM04} who determined thresholds for their emergence in this model. 

 %; see Theorem~\ref{thm:KST} for further details. 
	Sudakov and Vondr\'ak~\cite{SV08} studied the non-$2$-colourability of randomly perturbed dense hypergraphs. Furthermore, in the arithmetic-Ramsey setting, the first author and Person~\cite{AHP} established an (asymptotically) optimal Schur-type theorem for randomly perturbed dense sets of integers. 

	Problems concerning the emergence of non-monochromatic configurations in every (sensible) edge-colouring of a given graph are collectively referred to as {\em Anti-Ramsey} problems. Here, one encounters a great diversity of variants; further details can be found in the excellent survey~\cite{FMO10} and the many references therein. 

	An edge-colouring $\psi$ of a graph $G$ is said to be $b$-{\em bounded} if no colour is used on more than $b$ edges. It is said to be {\em locally-$b$-bounded} if every colour appears at most $b$ times at every vertex. In particular, locally-$1$-bounded colourings are the traditional {\em proper} colourings. 
	A subgraph $H \subseteq G$ is said to be {\em rainbow} with respect to an edge colouring $\psi$, if any two of its edges are assigned different colours under $\psi$, that is, if $|\psi(H)| := |\psi(E(H))| = e(H)$, where $\psi(E(H)) := \{\psi(e) : e \in E(H)\}$. We write $G \rainbow H$, if $G$ has the property that every proper colouring of its edges admits a rainbow copy of $H$. 

	For a fairly complete overview regarding the emergence of small fixed rainbow configurations in random graphs with respect to every $b$-bounded colouring, see the work of Bohman, Frieze, Pikhurko, and Smyth~\cite{BFPS10} and references therein. 
	The first to consider the emergence of small fixed rainbow configurations in random graphs with respect to proper colourings were R\"odl and Tuza~\cite{RT92}. In a response to a question of Spencer (see, \cite[page 19]{Erdos79}), R\"odl and Tuza studied the emergence of rainbow cycles of fixed length. 
	
	The systematic study of the emergence of general rainbow fixed graphs in random graphs with respect to proper colourings was initiated by Kohayakawa, Kostadinidis and Mota~\cite{KKM14, KKM18}. 
	They~\cite{KKM14} proved that for every graph $H$, there exists a constant $C >0$ such that $\mathbb{G}(n,p) \rainbow H$, whenever $p \geq Cn^{-1/m_2(H)}$. 
	Nenadov, Person, \v{S}kori\'{c}, and Steger~\cite{NPSS17} proved, amongst other things, that for $H \cong C_\ell$ with $\ell \geq 7$, and for $H \cong K_r$ with $ r\geq 19$, $n^{-1/m_2(H)}$ is, in fact, the threshold for the property $\mathbb{G}(n,p) \rainbow H$.  
	Barros, Cavalar, Mota, and Parczyk~\cite{BCMP19} extended the result of~\cite{NPSS17} for cycles, proving that the threshold for the property $\mathbb{G}(n,p) \rainbow C_\ell$ remains $n^{-1/m_2(C_\ell)}$ also when $\ell \geq 5$. Kohayakawa, Mota, Parczyk, and Schnitzer~\cite{KMPS18} extended the result of~\cite{NPSS17} for complete graphs, proving that the threshold of $\mathbb{G}(n,p) \rainbow K_r$ remains $n^{-1/m_2(K_r)}$ also when $r \geq 5$.  

	For $C_4$ and $K_4$ the situation is different. The threshold for the property $\mathbb{G}(n,p) \rainbow C_4$ is $n^{-3/4} = o \left(n^{-1/m_2(C_4)} \right)$, as proved by Mota~\cite{Mota17}. For the property $\mathbb{G}(n,p) \rainbow K_4$, the threshold is $n^{-7/15} = o \left(n^{-1/m_2(K_4)} \right)$ as proved by Kohayakawa, Mota, Parczyk, and Schnitzer~\cite{KMPS18}. More generally, Kohayakawa, Kostadinidis and Mota~\cite{KKM18} proved that there are infinitely many graphs $H$ for which the threshold for the property $\mathbb{G}(n,p) \rainbow H$ is significantly smaller than $n^{-1/m_2(H)}$. 

	Note that every properly-coloured triangle is rainbow, so the threshold for $\mathbb{G}(n,p)$ containing a rainbow triangle in every proper edge-colouring is simply the threshold for containing a triangle, which is known to be $1/n$.

	\subsection{Our results}\label{sec:our-results}

		For a real $d > 0$, we say that $\sG_{d,n} \cup \mathbb{G}(n,p)$ a.a.s.\ satisfies a graph property $\P$, if 
		$$
		\lim_{n \to \infty} \Pro[G_n \cup \mathbb{G}(n,p) \in \P] = 1
		$$
		holds for {\sl every} sequence $\{G_n\}_{n \in \mathbb{N}}$ satisfying $G_n \in \sG_{d,n}$ for every $n \in \mathbb{N}$. We say that $\sG_{d,n} \cup \mathbb{G}(n,p)$ a.a.s.\ does not satisfy $\P$, if 
		$$
		\lim_{n \to \infty} \Pro[G_n \cup \mathbb{G}(n,p) \in \P] = 0
		$$
		holds for at {\sl least} one sequence $\{G_n\}_{n \in \mathbb{N}}$ satisfying $G_n \in \sG_{d,n}$ for every $n \in \mathbb{N}$. Throughout, we suppress this sequence-based terminology and write more concisely that $\sG_{d,n} \cup \mathbb{G}(n,p)$ a.a.s.\ satisfies (or does not) a certain property. In particular, given a fixed graph $H$, we write that a.a.s.\ $\sG_{d,n} \cup \mathbb{G}(n,p) \rainbow H$ to mean that for every sequence $\{G_n\}_{n \in \mathbb{N}}$, satisfying $G_n \in \sG_{d,n}$ for every $n \in \mathbb{N}$, the property $G_n \cup \mathbb{G}(n,p) \rainbow H$ holds asymptotically almost surely. On the other hand, we write that a.a.s.\ $\sG_{d,n} \cup \mathbb{G}(n,p) \notrainbow H$ to mean that there exists a sequence $\{G_n\}_{n \in \mathbb{N}}$, satisfying $G_n \in \sG_{d,n}$ for every $n \in \mathbb{N}$, for which a.a.s.\ $G_n \cup \mathbb{G}(n,p) \rainbow H$ does not hold.

		A sequence $\widehat{p}:=\widehat{p}(n)$ is said to form a {\em threshold} for the property $\P$ in the perturbed model, if $\sG_{d,n} \cup \mathbb{G}(n,p)$ a.a.s.\ satisfies $\P$ whenever $p = \omega(\widehat{p})$, and if 
		$\sG_{d,n} \cup \mathbb{G}(n,p)$ a.a.s.\ does not satisfy $\P$  whenever $p = o(\widehat{p})$.

		For every real $d > 0$ and every pair of integers $s,t \geq 1$, every sufficiently large graph $G \in \sG_{d,n}$ satisfies $G \rainbow K_{s,t}$; in fact, every proper colouring of $G$ supersaturates $G$ with $\Omega(n^{s+t})$ rainbow copies of $K_{s,t}$. The latter is a direct consequence of~\eqref{eq:non-rainbow} stated below (see also~\cite{KMS07}). Consequently, the property $\sG_{d,n} \cup \mathbb{G}(n,p) \rainbow K_{s,t}$ is trivial as no random perturbation is needed for it to be satisfied. The emergence of rainbow copies of non-bipartite prescribed graphs may then be of interest. For odd cycles (including $K_3$) we prove the following.

		\begin{proposition} \label{thm:main:odd-cycles}
			For every integer $\ell \geq 2$, and every real $0 < d \leq 1/2$, the threshold for the property $\sG_{d,n} \cup \mathbb{G}(n,p) \rainbow C_{2 \ell - 1}$ is $n^{-2}$.
		\end{proposition}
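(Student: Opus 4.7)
I would split the argument into the $0$-statement and the $1$-statement.

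\textbf{$0$-statement.} When $p = o(n^{-2})$, $\Ex[e(\mathbb{G}(n,p))] = \binom{n}{2}p = o(1)$, so $\mathbb{G}(n,p)$ is a.a.s.\ edgeless by Markov; it thus suffices to exhibit one sequence $\{G_n\}$ with $G_n \in \sG_{d,n}$ and $G_n \notrainbow C_{2\ell+1}$. For $d \leq 1/2$ I would take $G_n = K_{\lceil an \rceil, \lfloor (1-a)n \rfloor}$ with $a \in (0,1)$ chosen so that $2a(1-a) \geq d$: being bipartite, $G_n$ contains no odd cycle at all. (For $d > 1/2$ no bipartite seed exists, and one would need to glue a dense bipartite piece with an appropriately colored non-bipartite remainder.)

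\textbf{$1$-statement -- plan.} Fix $G_n \in \sG_{d,n}$ and set $\Gamma_n := G_n \cup \mathbb{G}(n,p)$ with $p = \omega(n^{-2})$. The plan is to show a.a.s.\ that for every proper coloring $\psi$ of $\Gamma_n$, some edge $uv \in E(\Gamma_n)$ can be closed by a rainbow $P_{2\ell}$ from $u$ to $v$ in $G_n$ avoiding color $\psi(uv)$, producing a rainbow $C_{2\ell+1}$. The key tool is the rainbow $K_{s,t}$-supersaturation~\eqref{eq:non-rainbow} applied with $(s,t) = (\ell, \ell+1)$: every proper coloring of $G_n$ hosts $\Omega(n^{2\ell+1})$ rainbow copies of $K_{\ell, \ell+1}$. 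Each such copy yields $(\ell+1)!\,\ell!$ automatically rainbow Hamilton paths of length $2\ell$ with endpoints on the $(\ell+1)$-side; summing contributions and averaging over endpoint-pairs, $\Omega(n^2)$ \emph{good pairs} $\{u,v\}$ each carry $\Omega(n^{2\ell-1})$ rainbow $P_{2\ell}$'s from $u$ to $v$. Since a color class in a proper coloring is a matching of size $\leq n/2$, and each edge lies on $O(n^{2\ell-3})$ paths of length $2\ell$ between fixed endpoints, at most $O(n^{2\ell-2})$ of those rainbow paths can use any given color $c$; hence for any good pair $\{u,v\}$ and any color $c$, at least one rainbow $P_{2\ell}$ from $u$ to $v$ avoids $c$. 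Finally, either some good pair is already an edge of $G_n$ (in which case no random edges are needed), or all good pairs are non-edges, of which there are $\Omega(n^2)$, so $\mathbb{G}(n,p)$ includes one a.a.s.\ since $\Pro[\text{no good non-edge lies in } \mathbb{G}(n,p)] \leq (1-p)^{\Omega(n^2)} = o(1)$. In either case, a good pair $\{u,v\}$ with $uv \in E(\Gamma_n)$ exists, and the rainbow $P_{2\ell}$ avoiding $\psi(uv)$ completes the rainbow $C_{2\ell+1}$.

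\textbf{Main obstacle.} The "good pair" notion above depends on the coloring $\psi$, whereas we need uniformity over the super-exponentially many proper colorings of $\Gamma_n$; a naive union bound is unavailable. I would address this by upgrading "good" to a coloring-independent notion: call $\{u,v\}$ \emph{structurally good} if $G_n$ contains $\Omega(n^{2\ell-1})$ copies of $K_{\ell,\ell+1}$ with $u,v$ on the $(\ell+1)$-side. A standard K\H{o}v\'ari--S\'os--Tur\'an-type count shows that $\Omega(n^2)$ such pairs exist in any $G_n \in \sG_{d,n}$; then applying~\eqref{eq:non-rainbow} locally (restricted to the $K_{\ell,\ell+1}$'s through $\{u,v\}$) guarantees that, in every proper coloring of $G_n$, a constant fraction of them remain rainbow, producing $\Omega(n^{2\ell-1})$ rainbow $P_{2\ell}$'s from $u$ to $v$ uniformly in $\psi$. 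Carrying out this "structural supersaturation" is the technical heart of the argument.
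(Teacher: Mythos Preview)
Your $0$-statement matches the paper's (which likewise restricts to $d\le 1/2$ without further comment; you are right that $d>1/2$ would require an additional construction). For the $1$-statement your approach is correct but takes a genuinely different route.

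The paper applies the dense regularity lemma to extract an $\eps$-regular bipartite pair $(U,W)$, partitions $U$ and $W$ into $\ell$ and $\ell{+}1$ blocks respectively, and singles out linear-sized sets $X\subseteq W_1$, $Y\subseteq W_{\ell+1}$ of vertices of large degree into $U_1$ and $U_\ell$. For every $(x,y)\in X\times Y$ it then builds a ``cylinder'' of nested neighbourhoods $U_1(x,y),W_2(x,y),\ldots,U_\ell(x,y)$, each of size $\Omega(n)$. A random edge lands in $X\times Y$ a.a.s.\ once $p=\omega(n^{-2})$; given such an edge $xy$ and any proper colouring $\psi$, a rainbow $(2\ell)$-path from $y$ back to $x$ is built \emph{greedily} through the cylinder, at each step selecting among $\Omega(n)$ candidates one whose connecting edge avoids the $O(\ell)$ colours already used.

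Your route via $K_{\ell,\ell+1}$-supersaturation is more elementary in that it avoids the regularity lemma, replacing it with the K\H{o}v\'ari--S\'os--Tur\'an count of $K_{\ell,\ell+1}$'s together with the non-rainbow bound~\eqref{eq:non-rainbow}. The ``structurally good pair'' device is exactly the right way to decouple the random event from the colouring: the set of pairs $\{u,v\}$ hosting $\Omega(n^{2\ell-1})$ copies of $K_{\ell,\ell+1}$ (with $u,v$ on the larger side) is deterministic and of size $\Omega(n^2)$, so the perturbation produces an edge on one of them a.a.s., after which the colouring-dependent counting is carried out per pair. The local bound you need---that at most $O(n^{2\ell-2})$ of the $K_{\ell,\ell+1}$'s through a fixed $\{u,v\}$ are non-rainbow, and likewise at most $O(n^{2\ell-2})$ use any prescribed colour---does follow from a short case split on how many endpoints of the offending edge(s) lie in $\{u,v\}$. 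One minor correction: your intermediate estimate $O(n^{2\ell-3})$ for paths through a fixed edge is off by a factor of $n$ when that edge is incident to $u$ or $v$, but since at most two edges of any colour class are of this type, the conclusion $O(n^{2\ell-2})$ is unaffected. A small bonus of your argument is that it treats $\ell=1$ uniformly, whereas the paper handles $K_3$ separately; conversely, the paper's regularity cylinder would adapt more readily to variants where the target cycle length grows with $n$.
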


		%\noindent
		%Roughly speaking, Proposition~\ref{thm:main:odd-cycles} asserts that as soon as the random perturbation introduces edges to the seed, rainbow odd cycles begin to emerge with respect to every proper colouring of the whole perturbation. 
		Unlike the aforementioned thresholds for the property $\mathbb{G}(n,p) \rainbow C_\ell$, established in~\cite{BCMP19,NPSS17}, the threshold for the counterpart property in the perturbed model is independent of the length of the cycle. The proof of Proposition~\ref{thm:main:odd-cycles} is fairly standard and is thus postponed until Section~\ref{sec:odd-cylces}. 

		\smallskip

		Our main result concerns the thresholds for the emergence of rainbow complete graphs in properly coloured randomly perturbed dense graphs. The aforementioned results of~\cite{KMPS18,NPSS17} can be easily used in order to establish a lower bound on such thresholds. Indeed, by these results, if $r \geq 5$ and $p = o \left(n^{-1/m_2(K_r)} \right)$, then a.a.s.\ there exists a proper colouring of the edges of $\mathbb{G}(n,p)$ admitting no rainbow copy of $K_r$. Consequently, given a real number $0 < d \leq 1/2$ and an $n$-vertex bipartite graph $G$ of edge-density $d$, a.a.s.\ there exists a proper edge-colouring of $G \cup \mathbb{G}(n,p)$ admitting no rainbow copy of $K_{2r-1}$, provided that $p = o \left(n^{-1/m_2(K_r)} \right)$. We conclude that $\sG_{d,n} \cup \mathbb{G}(n,p) \notrainbow K_{2r}$ and $\sG_{d,n} \cup \mathbb{G}(n,p) \notrainbow K_{2r-1}$ hold a.a.s.\ whenever $p = o \left(n^{-1/m_2(K_r)} \right)$. 

		For every $r \geq 5$, we prove a matching upper bound for the above construction. Our main result reads as follows. 

		\begin{theorem}\label{thm:main}
			Let a real number $0 < d \leq 1/2$ and an integer $r \geq 5$ be given. 
			Then, the threshold for the property $\sG_{d,n} \cup \mathbb{G}(n,p) \rainbow K_{2r}$ is $n^{-1/m_2(K_r)}$. In fact,  $\sG_{d,n} \cup \mathbb{G}(n,p)$ a.a.s.\ has the property that every proper colouring of its edges gives rise to $\Omega\left(p^{2\binom{r}{2}}n^{2r}\right)$ rainbow copies of $K_{2r}$, whenever $p = \omega(n^{-1/m_2(K_r)})$. 
		\end{theorem}
		
		The following result is an immediate consequence of Theorem~\ref{thm:main} and of the aforementioned lower bound.

		\begin{corollary}\label{cor:main:odd}
			Let a real number $0 < d \leq 1/2$ and an integer $r \geq 5$ be given. 
			Then, the threshold for the property $\sG_{d,n} \cup \mathbb{G}(n,p) \rainbow K_{2r-1}$ is $n^{-1/m_2(K_r)}$. 
		\end{corollary}

		Theorem~\ref{thm:main} and Corollary~\ref{cor:main:odd} establish that for {\sl sufficiently large} complete graphs, i.e., $K_s$ with $s \geq 9$, the threshold for the property $\sG_{d,n} \cup \mathbb{G}(n,p) \rainbow K_s$ is governed by a single parameter, namely, $m_2(K_{\ceil{s/2}})$. This turns out to be true (almost, at least) for $s = 8$ as well, but proving it requires new ideas. For $4 \leq s \leq 7$, this is not the case; here, for each value of $s$ in this range, the threshold is different. Using completely different methods, we prove the following results in the companion paper~\cite{ADHLsmall}. 
		
		\begin{theorem}\label{thm:main:457}
			Let $0 < d \leq 1/2$ be given. 
			\begin{enumerate}
				\item The threshold for the property $\sG_{d,n} \cup \mathbb{G}(n,p) \rainbow 
							K_4$ is $n^{-5/4}$.
							
				\item The threshold for the property $\sG_{d,n} \cup \mathbb{G}(n,p) \rainbow 
							K_5$ is $n^{-1}$. 
							
				\item The threshold for the property $\sG_{d,n} \cup \mathbb{G}(n,p) \rainbow 
							K_7$ is $n^{-7/15}$. 
					
			\end{enumerate}
		\end{theorem}

		For $K_6$ and $K_8$ we can ``almost'' determine the thresholds.

		\begin{theorem}\label{thm:main:6}
			Let $0 < d \leq 1/2$ be given. 
			\begin{enumerate}
				\item 
					The property $\sG_{d,n} \cup \mathbb{G}(n,p) \rainbow K_6$ holds a.a.s.\ whenever $p = \omega(n^{-2/3})$. 
				\item 
					For every constant $\eps > 0$ it holds that a.a.s.\ $\sG_{d,n} \cup \mathbb{G}(n,p) \notrainbow K_6$ whenever $p := p(n) = n^{-(2/3 + \eps)}$. 
			\end{enumerate}
		\end{theorem}

		\begin{theorem}\label{thm:main:8}
			Let $0 < d \leq 1/2$ be given. 
			\begin{enumerate}
				\item  
					The property $\sG_{d,n} \cup \mathbb{G}(n,p) \rainbow K_8$ holds a.a.s.\ whenever $p = \omega(n^{-2/5})$. 
				\item 
					For every constant $\eps > 0$ it holds that a.a.s.\ $\sG_{d,n} \cup \mathbb{G}(n,p) \notrainbow K_8$ whenever $p := p(n) = n^{-(2/5 + \eps)}$. 
			\end{enumerate}
		\end{theorem}

		Note that Part 1 of Theorem~\ref{thm:main:8} follows directly from our results in this paper (see Proposition~\ref{prop:even-1-statement} in Section~\ref{sec::mainproof}). Part 2 however, requires new ideas which are detailed in~\cite{ADHLsmall}.

\subsection{Overview of the proof of Theorem~\ref{thm:main}}

In this section, we provide a brief overview of the main ideas involved in our proof of the main result of this paper, namely Theorem~\ref{thm:main}. We consider the emergence of rainbow copies of $K_{2r}$ in $G \cup R$, where  $G \in \sG_{d,n}$ with $n$ sufficiently large, $0 < d \leq 1/2$ is fixed, and $R \sim \mathbb{G}(n,p)$ with $p:= p(n)$ as in Theorem~\ref{thm:main}. Without loss of generality (and owing to Szemer\'edi's regularity lemma~\cite{Szemeredi78}), the graph $G$ can be assumed to be an $\eps$-regular bipartite graph (see Section~\ref{sec:pre} for a definition) with vertex bipartition $V(G) = U \discup W$ such that $|U| = |W| = m = \Omega(n)$. Our argument entails the exposure of $R \sim \mathbb{G}(n,p)$, first over $W$ and then over $U$ as described below. 

Prior to exposing $R$ over $W$, we prove that $R[W]$ a.a.s.\ satisfies simultaneously the three properties detailed below in Claim~\ref{clm:round1-props}; at this stage not all properties stated in that claim can be motivated. The third property asserts that no matter how $R[W]$ is properly edge-coloured, $\Omega \left(p^{\binom{r}{2}}n^r \right)$ rainbow copies of $K_r$ arise in $R[W]$; moreover, all such rainbow copies are realised over a {\sl predetermined} collection of so-called {\sl desirable} $r$-subsets of $W$. By desirable we mean $r$-subsets whose common neighbourhood in $U$ with respect to $G$ is linear in $n$. Such a {\sl rainbow supersaturation} result, targeting a predetermined collection of $r$-subsets, is proved with the aid of Proposition~\ref{prop:Yoshi} (implicit in~\cite{KKM14}) and  the so-called K{\L}R-Theorem, namely Theorem~\ref{thm:KLR}~\cite[Theorem~1.6(i)]{CGSS14} stated below.

We expose $R$ over $W$ and may assume from now on that the part exposed has the aforementioned properties. 
We then prove {\sl deterministically} that given {\sl any} proper edge-colouring of $G$ and given any rainbow copy $K$ of $K_r$ supported on a desirable $r$-subset of $W$, there are $(1 - o(1))\binom{|N_G(V(K))|}{r}$ many $r$-sets $Y$ in the common neighbourhood $N_G(V(K))$ of $V(K)$ in $G$ having the property that $K \cup G[V(K), Y]$ is rainbow under the given colouring; this is seen in~\eqref{eq:many-rainbow-bip}.

Put another way, as soon as we expose $R$ over $W$, any proper edge-colouring of $R[W] \cup G$ gives rise to $\Omega \left(p^{\binom{r}{2}} n^r \right)$ rainbow copies of $K_r$, all supported on desirable $r$-subsets of $W$ (with respect to $G$). Moreover, any such rainbow copy $K$ has the property that $K \cup G[V(K),Y]$ is rainbow for virtually all $r$-subsets $Y$ of its common neighbourhood.

Prior to exposing $R$ over $U$, we prove that $R[U]$ a.a.s.\ simultaneously satisfies the four properties detailed below in Claim~\ref{clm:round2-props}. Being highly  technical in nature, we are unable to state these here in any meaningful way. Collectively, these properties enable us to appeal yet again to rainbow supersaturation properties of $R[W]$ so as to obtain, given a rainbow copy $K$ of $K_r$ in $W$, a sufficient number of rainbow copies $K'$ of $K_r$ in the common neighbourhood of $V(K)$ in $U$ for which $K \cup G[V(K), V(K')]$ is also rainbow. We show that this holds a.a.s.\ for every proper edge-colouring of $G \cup R$.

At this stage, we have an adequate number of pairs $(K,K') \in W \times U$ such that $K'$ is rainbow and $K \cup G[V(K),V(K')]$ is rainbow. However, there could still be colour clashes between the colours seen on $E(K')$ and $E(K)$ or between those seen on $E(K')$ and those found on $E(G[V(K),V(K')]$; in either of these cases the pair $(K, K')$ is said to be \emph{bad}. The last part of the proof is a {\sl deterministic} argument asserting that there exist many pairs $(K,K')$ as above that are not bad and thus induce a rainbow copy of $K_{2r}$.

\section{Preliminaries}\label{sec:pre}

	Throughout, we make appeals to both the {\sl dense} regularity lemma~\cite{Szemeredi78} (see also~\cite{KS96}) and the {\sl sparse} regularity lemma~\cite{K97} (see also~\cite{GS05}). For a bipartite graph $G := (U \discup W, E)$ and two sets $U' \subseteq U$ and $W' \subseteq W$, write $d_G(U',W') := \frac{e_G(U',W')}{|U'||W'|}$ for the edge-density of the induced subgraph $G[U',W']$. For $0 < p \leq 1$, the graph $G$ is called $(\eps,p)$-{\em regular} if 
	$$
	|d_G(U',W') - d_G(U,W)| < \eps p 
	$$
	holds whenever $U' \subseteq U$ and $W' \subseteq W$ satisfy $|U'| \geq \eps |U|$ and $|W'| \geq \eps |W|$. We abbreviate $(\eps,1)$-regular to $\eps$-{\em regular}. 

	\smallskip

	We make repeated use of a result by Janson~\cite{Janson98} (see also~\cite[Theorems~2.14 and~2.18]{JLR}), regarding random variables of the form $X = \sum_{A \in \sS}I_A$. Here, $\sS$ is a family of non-empty subsets of some ground set $\Omega$ and $I_A$ is the indicator random variable for the event $A \subseteq \Omega_p$, where $\Omega_p$ is the so-called {\sl binomial random set} arising from including every element of $\Omega$ independently at random with probability $p$. For such random variables, set $\lambda := \Ex[X]$,  and define
	$$
	\overline{\Delta} := \sum_{A,B \in \sS} \sum_{A \cap B \neq \emptyset} \Ex[I_A I_B], \quad 
	\Delta :=  \frac{1}{2} \sum_{\stackrel{A,B \in \sS}{A \neq B}} \sum_{A \cap B \neq \emptyset} \Ex[I_A I_B].
	$$
	Janson's result concerns the lower tail of $X$. 

	\begin{theorem}\label{thm:Janson-tail} {\em~\cite[Theorem~2.14]{JLR}}
		For every $0 < t \leq \Ex[X]$, it holds that
		$$
		\Pro[X \leq \Ex[X] - t] \leq \exp(- t^2/2\overline{\Delta}).
		$$
	\end{theorem}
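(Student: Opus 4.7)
The plan is to run the standard exponential-moment (Chernoff-type) argument, with the bulk of the work being an upper bound on the Laplace transform of $X$. I would first note that for any parameter $u > 0$, Markov's inequality applied to the random variable $e^{-uX}$ yields
\[
\Pro[X \leq \lambda - t] \;=\; \Pro\!\left[e^{-uX} \geq e^{-u(\lambda - t)}\right] \;\leq\; e^{u(\lambda - t)} \, \Ex\!\left[e^{-uX}\right],
\]
so that the entire problem reduces to controlling $\phi(u) := \Ex[e^{-uX}]$ from above.

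The central estimate I would aim to establish is that for every $u \in [0,1]$,
\[
\phi(u) \;\leq\; \exp\!\left(-u\lambda + \tfrac{1}{2}\, u^2 \overline{\Delta}\right).
\]
Setting $\psi(u) := \log \phi(u)$, one checks that $\psi(0) = 0$ and $\psi'(0) = -\lambda$, so the displayed bound quantifies how much $\psi$ is allowed to curve above its tangent line at the origin. To control this I would pass to the tilted measure $\Pro_u[\,\cdot\,] := \Ex[\mathbf{1}_{\{\cdot\}} e^{-uX}]/\phi(u)$, so that $\psi'(u) = -\Ex_u[X]$, and then bound $\lambda - \Ex_u[X] = \sum_{A \in \sS} (\Pro[I_A = 1] - \Pro_u[I_A = 1])$. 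For each $A$, conditioning on $A \subseteq \Omega_p$ stochastically increases every other $I_B$, so the FKG inequality yields $\Ex[e^{-u(X - I_A)} \mid I_A = 1] \leq \Ex[e^{-u(X - I_A)}]$. The main technical obstacle I anticipate is converting this collection of one-indicator correlation estimates into the single additive bound $\lambda - \Ex_u[X] \leq u\,\overline{\Delta}$; this is precisely where the off-diagonal pairs with $A \cap B \neq \emptyset$ in the definition of $\overline{\Delta}$ must absorb the error, as contributions from independent pairs cancel by the product form of the expectation.

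Granted the key bound, the finish is immediate. Combining it with the Chernoff--Markov step gives
\[
\Pro[X \leq \lambda - t] \;\leq\; \exp\!\left(-ut + \tfrac{1}{2}\, u^2 \overline{\Delta}\right)
\]
for every $u \in [0,1]$, and one optimizes by taking $u := t/\overline{\Delta}$. This choice lies in $[0,1]$ because $t \leq \lambda \leq \overline{\Delta}$, where the second inequality follows from the fact that $\overline{\Delta}$ includes the diagonal contributions $\Ex[I_A^2] = \Ex[I_A]$. Substituting this choice of $u$ yields the claimed bound $\Pro[X \leq \lambda - t] \leq \exp(-t^2/(2\overline{\Delta}))$.
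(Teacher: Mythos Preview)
The paper does not give its own proof of this statement: Theorem~\ref{thm:Janson-tail} is quoted verbatim from~\cite[Theorem~2.14]{JLR} and used as a black box, so there is no in-paper argument to compare against.

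Your proposal is the standard proof, and it is essentially correct. The exponential-moment reduction is routine, and the heart of the matter is exactly the Laplace transform bound $\Ex[e^{-uX}] \leq \exp(-u\lambda + u^2\overline{\Delta}/2)$ that you isolate. Your plan to differentiate $\psi(u) = \log \phi(u)$, pass to the tilted measure, and use FKG to compare $\Ex_u[I_A]$ with $\Ex[I_A]$ is the textbook route; the inequality $\lambda - \Ex_u[X] \leq u\,\overline{\Delta}$ then integrates back to the desired bound, and the final optimisation with $u = t/\overline{\Delta} \in [0,1]$ is justified exactly as you say via $\lambda \leq \overline{\Delta}$. The one place you leave slightly vague is the passage from the per-indicator FKG estimates to the aggregate bound $\lambda - \Ex_u[X] \leq u\,\overline{\Delta}$; in the full proof this comes from writing $\Ex[I_A] - \Ex_u[I_A]$ in a form where $1 - e^{-u} \leq u$ can be applied and then recognising the resulting sum as $\overline{\Delta}$, but you have correctly identified where the effort lies.
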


	%\noindent
	%Following~\cite{JLR}, the second result is commonly referred to as the {\sl probability of nonexistence}. 
	%   
	%\begin{theorem}\label{thm:Janson-exist} {\em~\cite[Theorem~2.18]{JLR}}
	%	\begin{enumerate}
	%		\item $\Pro[X = 0] \leq \exp(-\lambda +\Delta)$. 
	%		\item $\Pro[X = 0] \leq \exp \left(-\frac{\lambda^2}{\lambda+2\Delta}\right)$.
	%	\end{enumerate}
	%\end{theorem}

	\medskip

	We also use the following lemma (see, e.g.,\ Lemma 2.1 in~\cite{FS11}), which is known as a basic \emph{dependent random choice} lemma.
		\begin{lemma} \label{lem:dependent-random-choice}
			Let $a, \overline{d}, m, n, r$ be positive integers. Let $G$ be a graph on $n$ vertices with average degree at least $\overline{d}$. If there exists a positive integer $t$ such that
			\[
				\frac{(\overline{d})^t}{n^{t-1}} - \binom{n}{r} \left(\frac{m}{n}\right)^t \geq a,
			\]
			then $G$ contains a set of vertices $U$ of size $|U| \geq a$ such that every $r$ vertices in $U$ have at least $m$ common neighbours.
		\end{lemma}

	We conclude this section with some additional notation which will be used throughout the paper. Given a sequence $f := f(n)$ and constants $\eps_1, \ldots, \eps_k > 0$ independent of $n$, we write $\Omega_{\eps_1,\ldots,\eps_k}(f)$, $\Theta_{\eps_1,\ldots,\eps_k}(f)$, and $O_{\eps_1,\ldots,\eps_k}(f)$ to mean that the constants which are implicit in the asymptotic notation depend on $\eps_1, \ldots, \eps_k$. If $g := g(n)$ is a sequence, then we sometimes write $f \gg g$ and $f \ll g$ to mean $f =  \omega(g)$ and $f = o(g)$, respectively. In addition, given two constants $\mu > 0$ and $\nu > 0$ we write $\mu \ll \nu$ to mean that, while $\mu$ and $\nu$ are fixed,  they can be chosen so that $\mu$ is arbitrarily smaller than $\nu$. 

\section{Properties of $\Gnp$}

	In this section, we collect the various properties of $\Gnp$ facilitating subsequent  arguments. Beyond the aggregation of such properties, the main result of this section is Proposition~\ref{prop:supersaturation} concerning the supersaturation of rainbow copies of a given fixed graph in $\Gnp$ with respect to any proper edge-colouring. We commence, however, with the more standard properties, some of which will also facilitate the proof of Proposition~\ref{prop:supersaturation}. 

	\subsection{Concentration results}

		While Theorem~\ref{thm:main} deals only with complete graphs, in this section we consider a more general class of graphs. 
Recall that a graph $H$ is \emph{strictly $2$-balanced} if $m_2(H) > m_2(K)$ whenever $K \subsetneq H$.
		%and note that any strictly $2$-balanced graph is connected and contains a cycle.} 
		% do we need the second part of the sentence?
		Throughout this section, $H$ denotes a fixed strictly $2$-balanced graph. 
		For such a graph $H$, let $\HH := \HH_n$ denote the family of (labelled) copies of $H$ in $K_n$. For every $\tilde H \in \HH$, let $Z_{\tilde H}$ denote the indicator random variable for the event $\tilde H \subseteq \Gnp$. Then, $X_H := \sum_{\tilde H \in \HH} Z_{\tilde H}$ counts the number of copies of $H$ in $\Gnp$. Note that 
		\begin{equation} \label{eq:expectation-H}	
			\Ex (X_H) = \sum_{\tilde H \in \HH} p^{e(\tilde H)} = p^{e(H)} \binom{n}{v(H)} \frac{(v(H))!}{|Aut(H)|} = \Theta \left(p^{e(H)} n^{v(H)} \right),
		\end{equation}
		where $Aut(H)$ is the automorphism group of $H$.

		We require large deviation inequalities for both the upper and lower tails of $X_H$. For the lower tail, we make the standard appeal to Janson's inequality (seen at~\eqref{eq:Janson} below) so as to subsequently yield Lemma~\ref{lem:prescribed-copies}. For the upper tail, however, the standard appeal to Chebyshev's inequality is insufficient for our needs. For indeed, subsequent arguments require that certain properties of $\Gnp$  hold with probability at least $1 - \Omega \left(n^{-b} \right)$, for some constant $b$ which we are allowed to choose to be sufficiently large. We thus replace the standard appeal to Chebyshev's inequality with an appeal to one of the main results of Vu's paper~\cite{Vu01}. 

		In broad terms, \cite[Theorem~2.1]{Vu01} asserts that if 
		$$
			p \gg (\log n)^{\frac{v(H)-1}{e(H)}} \cdot n^{- \frac{v(H)}{e(H)}},
		$$
		(i.e., $p$ is larger than the containment threshold for $H$ in $\Gnp$ by at least some polylogarithmic multiplicative factor), then a large deviation inequality for the upper tail of $X_H$ with a decaying exponential error rate exists. As our focus is on strictly $2$-balanced graphs and on $p = \Omega \left(n^{-1/m_2(H)} \right)$, we make do with the following more relaxed formulation of the aforementioned result of Vu~\cite{Vu01}. 

		\begin{theorem} \label{thm:Vu}{\em ~\cite[Theorem~2.1]{Vu01}}
			For any (fixed) $\alpha > 0$ and any strictly $2$-balanced graph $H$, there exists a constant $C_{\ref{thm:Vu}} > 0$, such that  
			$$
			\Pro[X_H \geq (1 + \alpha) \Ex (X_H)] \leq \exp \left(- \Omega_{\alpha,H} \left((\Ex (X_H))^{1/(v(H)-1)}\right)\right)
			$$
			holds whenever $p \geq C_{\ref{thm:Vu}} n^{- 1/m_2(H)}$. 
		\end{theorem}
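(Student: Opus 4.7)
Since the statement is a (slightly relaxed) reformulation of~\cite[Theorem~2.1]{Vu01}, my plan is to invoke Vu's result as a black box; below I sketch how one arrives at its conclusion. The starting point is to express $X_H = \sum_{\tilde H \in \HH} Z_{\tilde H}$ as a multilinear polynomial of degree $e(H)$ in the independent Bernoulli edge indicators of $\Gnp$, and consider the expected partial derivatives
\[
E_j := \max_{|S| = j} \Ex[\partial^S X_H], \qquad j \in \{0, 1, \ldots, e(H)\},
\]
where $S$ ranges over $j$-edge subsets of $\binom{[n]}{2}$. Viewing such an $S$ as a graph $F$, we have $\Ex[\partial^S X_H] = \Theta(p^{e(H)-e(F)} n^{v(H) - v(F)})$ when $F$ embeds into $H$, and $0$ otherwise; in particular, $E_0 = \Ex X_H = \Theta(p^{e(H)} n^{v(H)})$.

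The key combinatorial step is to show that for $p \geq C_{\ref{thm:Vu}}\, n^{-1/m_2(H)}$ with $C_{\ref{thm:Vu}}$ sufficiently large, $E_j / E_0 \to 0$ uniformly for every $j \geq 1$, which is where the assumption that $H$ is strictly $2$-balanced enters. Indeed, for every non-empty proper subgraph $F \subsetneq H$ we have $(e(F)-1)/(v(F)-2) < m_2(H)$, and a short algebraic rearrangement combined with the lower bound on $p$ shows that $p^{e(H)-e(F)} n^{v(H)-v(F)} = o(\Ex X_H)$. With these bounds in hand, the polynomial concentration machinery of Kim--Vu, applied in the refined upper-tail form given by~\cite{Vu01}, yields
\[
\Pro[X_H \geq (1+\alpha)\Ex X_H] \leq \exp\bigl(-\Omega_{\alpha, H}\bigl((\Ex X_H)^{1/(v(H)-1)}\bigr)\bigr).
\]
The exponent $1/(v(H)-1)$ captures the fact that $v(H) - 1$ random edges already span the vertex set of a single copy of $H$.

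The main technical obstacle is the bookkeeping required to translate strict $2$-balancedness into uniform decay of the ratios $E_j / E_0$, together with the verification of the Kim--Vu hypotheses in the form suited to upper-tail rather than two-sided concentration; both are handled in~\cite{Vu01}, which is why the quickest route is to quote that theorem directly once the regime $p \geq C_{\ref{thm:Vu}}\, n^{-1/m_2(H)}$ is identified as lying within its range of validity (after possibly enlarging $C_{\ref{thm:Vu}}$ depending on $\alpha$ and $H$ to absorb polylogarithmic factors that appeared in Vu's original statement).
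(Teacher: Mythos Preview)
The paper does not give its own proof of this statement; it is quoted directly as~\cite[Theorem~2.1]{Vu01} and used as a black box. Your proposal to invoke Vu's result as a black box is therefore exactly what the paper does, and the additional sketch you provide of the Kim--Vu machinery is supplementary rather than required (though your informal explanation of the exponent $1/(v(H)-1)$ in terms of ``$v(H)-1$ random edges'' is not quite the right intuition---in Vu's framework the exponent comes from the structure of the partial derivatives, not from edges spanning the vertex set).
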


		We proceed to deal with the lower tail of $X_H$ as outlined above. Writing $H' \sim H''$ whenever $(H', H'') \in \HH \times \HH$ are \emph{not} edge disjoint, let 
		$$
			\bar{\Delta}(H) := \sum_{\substack{(H', H'') \in \HH \times \HH \\ H' \sim H''}} \Ex[Z_{H'} Z_{H''}] = \sum_{\substack{(H', H'') \in \HH \times \HH \\ H' \sim H''}} p^{e(H') + e(H'') - e(H' \cap H'')}. 
		$$
		For a strictly $2$-balanced graph $H$, it is well-known that
		\begin{equation} \label{eq:Delta=expectation}
			\bar{\Delta}(H) = O_H \left(p^{e(H)} n^{v(H)}\right),
		\end{equation}
		whenever $p = \Omega \left(n^{- 1/m_2(H)} \right)$. %While standard, the proof of~\eqref{eq:Delta=expectation} is provided in Appendix~\ref{sec:correlation-proof} for completeness. 

		Janson's inequality, as seen in Theorem~\ref{thm:Janson-tail}, asserts that 
		$$
			\Pro[X_H \leq (1 - \xi) \Ex (X_H)] \leq e^{- \frac{\xi^2 (\Ex (X_H))^2}{2 \bar{\Delta}(H)}}
		$$
		holds for every fixed $0 \leq \xi \leq 1$.
		It then follows by~\eqref{eq:Delta=expectation} and~\eqref{eq:expectation-H} that 
		\begin{equation}\label{eq:Janson}
		\Pro\left[X_H \leq (1 - \xi) p^{e(H)} \binom{n}{v(H)} \frac{(v(H))!}{|Aut(H)|} \right] \leq e^{- \Omega \left(p^{e(H)} n^{v(H)} \right)}
		\end{equation}
		for every fixed $0 \leq \xi \leq 1$.

		We require a slight strengthening of~\eqref{eq:Janson}. Given a set $\C \subseteq \binom{[n]}{v(H)}$ satisfying $|\C| = \eta\binom{n}{v(H)}$ for some fixed $\eta >0$, write $X_H(\C)$ to denote the number of copies of $H$ in $\Gnp$ supported on the members of $\C$, that is, $X_H(\C) = \{\tilde H \in \HH : V(\tilde H) \in \C \textrm{ and } \tilde H \subseteq \Gnp\}$. Then, $\Ex[X_H(\C)] = \Theta_{H,\eta}(p^{e(H)}n^{v(H)})$. Set
		\begin{equation}\label{eq:Delta-C}
			\bar{\Delta}(H,\C) := \sum_{\substack{(H', H'') \in \HH(\C) \times \HH(\C) \\ H' \sim H''}} \Ex[Z_{H'} Z_{H''}],
		\end{equation}
		where here $\HH(C)$ serves as the analogue of $\HH$ for the copies of $H$ supported on $\C$. Since, clearly, $\bar{\Delta}(H,\C) \leq \bar{\Delta}(H)$, inequality~\eqref{eq:Janson} can be extended so as to yield 
		\begin{equation}\label{eq:targeted-Janson}
		\Pro\left[X_H(\C) \leq (1 - \xi) p^{e(H)} |\C| \frac{(v(H))!}{|Aut(H)|} \right] \leq e^{- \Omega \left(p^{e(H)} n^{v(H)} \right)}
		\end{equation}
		holds for every fixed $0 \leq \xi \leq 1$. The following is then established. 

		\begin{lemma} \label{lem:prescribed-copies}
			Let $H$ be a strictly $2$-balanced graph and let $\xi > 0$. There exists an integer $n_0$ such that for every $n \geq n_0$ the following holds. Let $\eta > 0$ be a constant and let $\C \subseteq \binom{[n]}{v(H)}$ satisfying $|\C| \geq \eta \binom{n}{v(H)}$ be fixed. Then, there exists a constant $C_{\ref{lem:prescribed-copies}} > 0$ such that with probability at least $1 - e^{- \Omega \left(p^{e(H)} n^{v(H)} \right)}$, the random graph $\Gnp$ has at least $(1 - \xi) p^{e(H)} |\C| \frac{(v(H))!}{|Aut(H)|}$ copies of $H$ supported on the members of $\C$, whenever $p \geq C_{\ref{lem:prescribed-copies}} n^{- 1/m_2(H)}$. 
		\end{lemma}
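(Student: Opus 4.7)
The plan is to derive the lemma as an almost immediate corollary of the targeted Janson inequality~\eqref{eq:targeted-Janson}, which has already been set up in the preceding discussion. The only real work is to unwrap the definitions and check that the quantitative bounds on $|\C|$ translate to the claimed lower bound on $X_H(\C)$.

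First, I would observe that $X_H(\C) = \sum_{\tilde H \in \HH(\C)} Z_{\tilde H}$ is a sum of indicator variables, with
\[
\Ex[X_H(\C)] \;=\; p^{e(H)} |\C| \frac{(v(H))!}{|Aut(H)|} \;\geq\; \eta \, p^{e(H)} \binom{n}{v(H)} \frac{(v(H))!}{|Aut(H)|},
\]
where the inequality uses the hypothesis $|\C| \geq \eta \binom{n}{v(H)}$. Next, I would bound the correlation term $\bar{\Delta}(H,\C)$ defined in~\eqref{eq:Delta-C} by $\bar{\Delta}(H)$ (since $\HH(\C) \subseteq \HH$) and invoke~\eqref{eq:Delta=expectation}, which gives $\bar{\Delta}(H,\C) \leq \bar{\Delta}(H) = O_H\!\left(p^{e(H)} n^{v(H)}\right)$. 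Crucially, the bound~\eqref{eq:Delta=expectation} is where the hypothesis $p \geq C_{\ref{lem:prescribed-copies}} n^{-1/m_2(H)}$ enters: the constant $C_{\ref{lem:prescribed-copies}}$ is chosen large enough so that $p = \Omega(n^{-1/m_2(H)})$ activates~\eqref{eq:Delta=expectation}.

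With $\lambda := \Ex[X_H(\C)] = \Theta_{H,\eta}(p^{e(H)} n^{v(H)})$ and $\bar{\Delta}(H,\C) = O_H(p^{e(H)} n^{v(H)})$ in hand, Janson's lower-tail inequality (Theorem~\ref{thm:Janson-tail}), applied with $t = \xi \lambda$, yields
\[
\Pro\!\left[X_H(\C) \leq (1-\xi) \lambda \right] \;\leq\; \exp\!\left(-\frac{\xi^2 \lambda^2}{2\bar{\Delta}(H,\C)}\right) \;\leq\; \exp\!\left(-\Omega\!\left(p^{e(H)} n^{v(H)}\right)\right),
\]
which is exactly the form of~\eqref{eq:targeted-Janson}. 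Combining this with the expectation lower bound from the first paragraph gives precisely the statement of the lemma.

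There is no substantive obstacle here; the lemma is essentially a reformulation of~\eqref{eq:targeted-Janson}, packaged with the simple observation that restricting copies of $H$ to vertex sets lying in any fixed dense family $\C$ costs only a factor of $\eta$ in expectation while not increasing the Janson correlation term. The only subtlety to flag in the write-up is the dependence of the implicit constants on $\eta$ and $H$ (which is harmless since both are fixed) and the need to take $n_0$ large enough that the asymptotic notation in~\eqref{eq:Delta=expectation} is valid.
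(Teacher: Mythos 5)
Your proposal is correct and follows exactly the route the paper intends: the lemma is stated as an immediate consequence of the targeted Janson inequality~\eqref{eq:targeted-Janson}, which is itself obtained from Theorem~\ref{thm:Janson-tail} together with the bound $\bar{\Delta}(H,\C) \leq \bar{\Delta}(H) = O_H(p^{e(H)}n^{v(H)})$ from~\eqref{eq:Delta=expectation}, precisely as you describe. The only step you add beyond the paper's implicit argument is the (correct) observation that $(1-\xi)\,p^{e(H)}|\C|\frac{(v(H))!}{|Aut(H)|} \geq (1-\xi)\,\eta\, p^{e(H)}\binom{n}{v(H)}\frac{(v(H))!}{|Aut(H)|}$, which converts~\eqref{eq:targeted-Janson} into the stated conclusion.
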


		The exponential rate of decay, seen in the error probability of Lemma~\ref{lem:prescribed-copies}, will be used in subsequent applications where we will need a union bound to be extended over a large family, as specified in the following corollary. 

		\begin{corollary} \label{cor:prescribed-copies}
			Let $H$ be a strictly $2$-balanced graph and let $\xi > 0$. There exists an integer $n_0$ such that for every $n \geq n_0$ the following holds. Let $\eta > 0$ be a constant and let a non-empty set $\sC$ comprising of at most $2^{O(n \log n)}$ sets $\C \subseteq \binom{[n]}{v(H)}$, each satisfying $|\C| \geq \eta \binom{n}{v(H)}$, be fixed. Then, there exists a constant $C_{\ref{cor:prescribed-copies}} >0$ such that with probability at least $1 - e^{O \left(n \log n \right) - \Omega \left(p^{e(H)} n^{v(H)} \right)}$, the random graph $\Gnp$ has at least $(1 - \xi) p^{e(H)} |\C| \frac{(v(H))!}{|Aut(H)|}$ copies of $H$ supported on the members of $\C$, for every $\C \in \sC$, whenever $p \geq C_{\ref{cor:prescribed-copies}} n^{- 1/m_2(H)}$. 
		\end{corollary}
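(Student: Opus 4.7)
The plan is to derive Corollary~\ref{cor:prescribed-copies} directly from Lemma~\ref{lem:prescribed-copies} via a union bound, exploiting the fact that the failure probability guaranteed by the lemma decays exponentially in $p^{e(H)} n^{v(H)}$, which grows polynomially in $n$ and therefore overwhelms the $e^{O(n \log n)}$ cost of the union bound.

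First, I would set $C_{\ref{cor:prescribed-copies}} := C_{\ref{lem:prescribed-copies}}$ (from Lemma~\ref{lem:prescribed-copies} applied with parameters $\xi$ and $\eta$), and fix $p \geq C_{\ref{cor:prescribed-copies}} n^{-1/m_2(H)}$. For each individual set $\C \in \sC$, Lemma~\ref{lem:prescribed-copies} implies that the event
\[
\mathcal{B}_\C := \left\{ X_H(\C) < (1-\xi)\, \eta\, p^{e(H)} \binom{n}{v(H)} \frac{(v(H))!}{|\mathrm{Aut}(H)|} \right\}
\]
has probability at most $e^{-\Omega(p^{e(H)} n^{v(H)})}$, where the implicit constants depend only on $H$, $\eta$, and $\xi$, and not on the particular $\C$. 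Then a straightforward union bound over $\sC$ yields
\[
\Pro\!\left[\bigcup_{\C \in \sC} \mathcal{B}_\C\right] \leq |\sC| \cdot e^{-\Omega(p^{e(H)} n^{v(H)})} \leq e^{O(n \log n) - \Omega(p^{e(H)} n^{v(H)})},
\]
which is precisely the bound in the statement.

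To see that this bound is meaningful (in particular, $o(1)$), one notes that for a strictly $2$-balanced graph $H$ (which is connected and contains a cycle, hence satisfies $e(H) \geq v(H)$ when $H$ is a cycle and $e(H) > v(H)$ otherwise, so in any case $v(H) < 2 e(H)$), one has $e(H)/m_2(H) = e(H)(v(H)-2)/(e(H)-1) < v(H)$, so that for $p \geq C_{\ref{cor:prescribed-copies}} n^{-1/m_2(H)}$,
\[
p^{e(H)} n^{v(H)} \geq C_{\ref{cor:prescribed-copies}}^{e(H)} \cdot n^{\, v(H) - e(H)/m_2(H)} = \Omega(n^{\alpha})
\]
for some constant $\alpha > 0$ depending only on $H$. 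In particular, $p^{e(H)} n^{v(H)} \gg n \log n$, which not only guarantees the stated failure probability tends to zero but also shows that the $e^{O(n \log n)}$ overhead from the union bound is harmless.

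There is no real obstacle here: the only point requiring mild care is to make sure the implicit constants in Lemma~\ref{lem:prescribed-copies}'s error exponent depend only on $H$, $\eta$, and $\xi$, and not on the specific set $\C$ under consideration; this is evident from the proof of Lemma~\ref{lem:prescribed-copies} since $\bar{\Delta}(H,\C) \leq \bar{\Delta}(H)$ uniformly in $\C$. Given this uniformity, the union bound is immediate and the corollary follows.
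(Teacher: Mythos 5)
Your proof is correct and is exactly the argument the paper intends: the corollary follows from Lemma~\ref{lem:prescribed-copies} by a union bound over the at most $2^{O(n\log n)}$ members of $\sC$, with the uniformity of the error exponent across the sets $\C$ guaranteed by $\bar{\Delta}(H,\C)\leq\bar{\Delta}(H)$. Your additional check that $p^{e(H)}n^{v(H)}\gg n\log n$ for strictly $2$-balanced $H$ matches the remark the paper makes immediately after the corollary.
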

		 
		Corollary~\ref{cor:prescribed-copies} is meaningful as long as $H$ and $p$ are such that $p^{e(H)}n^{v(H)} \gg n \log n$ holds; this inequality clearly holds if $H$ is strictly $2$-balanced and $p = \Omega \left(n^{- 1/m_2(H)} \right)$. Nevertheless, in Corollary~\ref{cor:prescribed-copies}, we keep the error probability in its explicit form in order to facilitate subsequent arguments.

	\subsection{Rainbow supersaturation in $\Gnp$}\label{sec:supersaturation}

		The main result of this section, is a {\sl supersaturation} version of the main result of~\cite{KKM14}, and can be seen in Proposition~\ref{prop:supersaturation} below. In its simplest form, Proposition~\ref{prop:supersaturation} asserts that given a strictly $2$-balanced graph $H$, the random graph $\Gnp$ a.a.s.\ has the property that every proper colouring of its edges admits $\Omega(\Ex (X_H))$ rainbow copies of $H$. The formulation of Proposition~\ref{prop:supersaturation} is somewhat more involved as in subsequent arguments we require supersaturation of rainbow copies supported on the members of prescribed subsets of $\binom{[n]}{v(H)}$ and, moreover, we require $\Gnp$ to satisfy the aforementioned property with ``very high'' probability. Our proof of Proposition~\ref{prop:supersaturation} employs the so-called K{\L}R-theorem~\cite[Theorem~1.6(i)]{CGSS14} and the core so-called `technical' result of~\cite[Section~5]{KKM14} (see Proposition~\ref{prop:Yoshi} below). 

		Prior to proving Proposition~\ref{prop:supersaturation}, an explanation as to our appeal to the K{\L}R-theorem is warranted. To this end, let us consider the task of establishing supersaturation of rainbow copies of a prescribed graph $H$ in a host graph $G$, without the additional restriction imposed by Proposition~\ref{prop:supersaturation}, mandating that these copies all be supported on a pre-chosen set of $v(H)$-sets. 
		A moment's thought\footnote{This argument is reproduced below in the proof of Observation~\ref{obs:non-rainbow-Krr}.}, reveals that upon fixing a proper edge-colouring of $G$, the number of non-rainbow copies of $H$ can be upper bounded by

		\begin{equation} \label{eq:non-rainbow}
		e(G) \cdot n \cdot \max_{\substack{e,f \in E(G)\\ e\, \cap\, f = \emptyset}} \left|\{e,f\} \extends{G}{H}\right|,
		\end{equation}
		where $\{e,f\} \extends{G}{H}$ denotes the set of injections of the form $H \mapsto G$ constrained to containing the listed pair of edges of $G$, namely $e$ and $f$. Put another way, this set is comprised of all the so-called {\em extensions} of the pair of fixed edges $\{e,f\}$ into a (labelled) copy of $H$ in $G$. This type of argument can also be seen in~\cite{KMS07}, where it is attributed to~\cite{AJMP03}. 
		   
		Employing~\eqref{eq:non-rainbow} in order to establish Proposition~\ref{prop:supersaturation} entails attaining sufficiently tight upper bounds on $\left| \{e,f\} \extends{\Gnp}{H} \right|$ for any pair of independent edges in $\Gnp$ for the relevant values of $p$. In our case, $p$ can be as low as to yield $\Ex \left(\left|\{e,f\} \extends{\Gnp}{H} \right|\right) = o(1)$; rendering standard concentration-type arguments for estimating the order of magnitude of $\left|\{e,f\} \extends{\Gnp}{H}\right|$ meaningless. We circumvent this obstacle by resorting to a more detailed analysis of counting non-rainbow copies of $H$ in $\Gnp$; the latter approach, as mentioned above, entails the use of the K{\L}R-theorem. Nevertheless,~\eqref{eq:non-rainbow} remains relevant as will be seen in the sequel. 

		%\bigskip

		%To state the K{\L}R-theorem we require some additional notation and terminology related to {\sl sparse regularity}~\cite{K97} (see also~\cite{GS05}). Let $B = (U \discup W, E)$ be a bipartite graph. For sets $U' \subseteq U$ and $W' \subseteq W$, let $B[U',W']$ denote the induced subgraph of $B$ on the vertex set $U' \cup W'$, that is, $V(B[U',W']) = U' \cup W'$ and $E(B[U',W']) = \{uw \in E : u \in U' \textrm{ and } w \in W'\}$. Let $d_B(U',W') := \frac{e_B(U',W')}{|U'| |W'|}$ denote the edge density of $B[U',W']$. The graph $B$ is said to be $(\eps,p)$-{\em regular} if 
		%$$
		%|d_B(U',W') - d_B(U,W)| \leq \eps p,
		%$$
		%whenever $U' \subseteq U$ and $W' \subseteq W$ satisfy $|U'| \geq \eps |U|$ and $|W'| \geq \eps |W|$. An $(\eps,1)$-regular graph is called $\eps$-{\em regular}. 

		%We prepare for the statement of the K{\L}R-theorem. 
		Let $V(H) = [h] := \{1, \ldots, h\}$. Following~\cite{CGSS14}, we write $\G(H,\ell,m,p,\eps)$ to denote the collection of graphs $\Gamma$ obtained as follows. 
		The vertex set of $\Gamma$ is 
		$
		V(\Gamma) = V_1 \discup \cdots \discup V_h
		$,
		where $|V_i| = \ell$ for every $i \in [h]$. For every edge $ij \in E(H)$, add an $(\eps,p)$-regular graph with at least $m$ edges between the pair $(V_i, V_j)$; these are the sole edges of $\Gamma$. For such a graph $\Gamma$, a copy of $H$ in $\Gamma$ is called {\em canonical} if it has a single vertex in each $V_i$. We write $\Gamma(H)$ to denote the collection of canonical copies of $H$ in $\Gamma$. 

		\begin{remark}
			When $H$ is a complete graph (as will be the case later on), every copy of $H$ in $\Gamma$ is a canonical copy of $H$.
		\end{remark}

		%The main so-called `technical' result of~\cite[Section~5]{KKM14}, reads as follows. 
		The following result is implicit in~\cite{KKM14}.

		\begin{proposition} \label{prop:Yoshi} %{\em~\cite[Section~5]{KKM14}}
			For every graph $H$ and every real number $b > 0$, there exist a constant $\beta_{\ref{prop:Yoshi}} > 0$ and an integer $n_0 > 0$ such that the following holds for every (fixed) $\eps > 0$. If $n \geq n_0$ and $p := p(n) \geq C_{\ref{prop:Yoshi}} \log n/n$, where $C_{\ref{prop:Yoshi}} > 0$ is an appropriately chosen constant, then $\Gnp$ satisfies the following property with probability at least $1 - \Omega \left(n^{-b} \right)$. Every proper colouring $\psi$ of the edges of $\Gnp$ gives rise to a subgraph $\Gamma_\psi \subseteq \Gnp$ satisfying $\Gamma_\psi \in \G(H,\floor{n/v(H)}, \floor{\beta_{\ref{prop:Yoshi}} p \floor{n/v(H)}^2}, p, \eps)$; moreover, every member of $\Gamma_\psi(H)$ is rainbow under $\psi$. 
			
			%the latter has the additional property of having every member\footnote{$\Gamma_\psi(H) = \emptyset$ is possible whenever $p \ll n^{- 1/m_2(H)}$.} of $\Gamma_\psi(H)$ being rainbow with respect to $\psi$.
		\end{proposition}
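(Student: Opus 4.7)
The plan is to construct $\Gamma_\psi$ by combining a random equipartition of the vertex set of $\Gnp$ with a random partition of the colours used by $\psi$ into $e(H)$ classes, one per edge of $H$. First, partition $[n]$ uniformly at random into $h := v(H)$ parts $V_1, \ldots, V_h$ of equal size $\ell := \lfloor n/h \rfloor$; using standard tail bounds on $\Gnp$ together with the regularity-inheritance results in the sparse K\L R framework of~\cite{CGSS14}, one checks that with probability at least $1 - O(n^{-b})$ over the joint choice of $\Gnp$ and the partition, every bipartite graph $\Gnp[V_i, V_j]$ is $(\eps/2, p)$-regular and has edge-density $(1 + o(1))p$. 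Fix such an outcome.

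Next, for any proper colouring $\psi$, I would assign each colour used by $\psi$ independently and uniformly at random to one of the $e(H)$ edges of $H$, producing pairwise disjoint colour pools $\{\mathcal{C}_{ij}\}_{ij \in E(H)}$. Let $\Gamma_\psi^{(ij)} \subseteq \Gnp[V_i, V_j]$ be the subgraph formed by the edges whose colour lies in $\mathcal{C}_{ij}$, and let $\Gamma_\psi := \bigcup_{ij \in E(H)} \Gamma_\psi^{(ij)}$. Every canonical copy of $H$ in $\Gamma_\psi$ uses exactly one edge from each $\Gamma_\psi^{(ij)}$, and these edges carry colours drawn from pairwise disjoint pools; such a copy is therefore automatically rainbow, handling the rainbow conclusion of the proposition for free. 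Since $\psi$ is proper, each colour class meets $\Gnp[V_i, V_j]$ in a matching and thus contributes at most $\ell$ edges there; this bounded per-colour contribution allows concentration arguments of the type developed in~\cite{KKM14} to show that, for each fixed witness pair $(U, W)$ with $U \subseteq V_i$ and $W \subseteq V_j$ of size at least $\eps \ell$, the edge count of $\Gamma_\psi^{(ij)}$ within $U \times W$ is tightly concentrated around $p|U||W|/e(H)$. Combined with the sparse regularity-inheritance statements of~\cite{CGSS14}, this upgrades into genuine $(\eps, p)$-regularity of each $\Gamma_\psi^{(ij)}$, carrying at least $\beta_{\ref{prop:Yoshi}} p \ell^2$ edges for a suitably small $\beta_{\ref{prop:Yoshi}} > 0$ depending only on $e(H)$ and $\eps$.

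The main obstacle is to run the argument uniformly in $\psi$: the colour-randomisation depends on $\psi$, yet the conclusion must hold for \emph{every} proper colouring simultaneously. My plan avoids a prohibitive union bound over proper colourings by separating the random choices: the regularity-inheritance properties demanded of $\Gnp$ are intrinsic to $\Gnp$ alone and hold with probability $1 - O(n^{-b})$, while for each fixed $\psi$ the random colour assignment merely needs to witness the \emph{existence} of a good deterministic colour partition, which it does as long as the per-colouring failure probability is strictly below one. Verifying that the concentration afforded by the random colour assignment is strong enough to survive the union bound over witness pairs of size $2^{O(\ell)}$ in the sparse regime is the technical heart of the argument; this is precisely where the full strength of the K\L R-theorem as stated in~\cite[Theorem~1.6(i)]{CGSS14}, together with the detailed analysis of~\cite{KKM14}, is invoked.
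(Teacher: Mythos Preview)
The paper does not give its own proof of this proposition; it merely records that the assertion is implicit in the proof of Lemma~5.3 of~\cite{KKM14}, and that the probability bound $1 - O(n^{-b})$ can be traced, via Lemma~3.3 of~\cite{KKM14}, to Theorem~1.1 of~\cite{FO05}. Your sketch recovers the essential device of~\cite{KKM14}: randomly partitioning the colour palette of $\psi$ into $e(H)$ disjoint pools, one per edge of $H$, so that every canonical copy of $H$ in $\Gamma_\psi$ is automatically rainbow; and your separation of the argument into properties intrinsic to $\Gnp$ (held with probability $1 - O(n^{-b})$) versus a per-$\psi$ existence statement witnessed by a positive-probability random colour assignment is exactly right.

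One correction is in order, though it does not invalidate the overall approach. You attribute the $(\eps,p)$-regularity of the colour-restricted bipartite graphs $\Gamma_\psi^{(ij)}$ to the K\L R counting theorem~\cite[Theorem~1.6(i)]{CGSS14}. That theorem counts canonical copies of $H$ once regularity is already in hand; it does not help to establish regularity. A direct concentration bound over the random colour assignment, using only that each colour class contributes at most $\ell$ edges to a witness pair $U\times W$, yields tails of order $e^{-\Omega(p\ell)}$, which is far too weak to beat a $2^{O(\ell)}$ union bound over witness pairs when $p = \Theta(\log n/n)$. The argument in~\cite{KKM14} (their Lemma~3.3) does not proceed this way; it instead exploits a deterministic hereditary discrepancy property of $\Gnp$ supplied by~\cite{FO05}, under which sub-bipartite graphs obtained by keeping a union of colour classes of the right total size inherit $(\eps,p)$-regularity. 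That is the ingredient the paper's remark singles out, and it is what should replace your appeal to~\cite[Theorem~1.6(i)]{CGSS14} at this step.
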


		\begin{remark}
			{\em The assertion of Proposition~\ref{prop:Yoshi} can be found in the proof of Lemma 5.3 in~\cite{KKM14}. This lemma relies on various additional results. In order to obtain the bound $1 - \Omega \left(n^{-b} \right)$ stated in Proposition~\ref{prop:Yoshi}, one has to verify that the assertion of Lemma 3.3 from~\cite{KKM14} holds with this probability (as opposed to simply a.a.s.\ as is stated there). 
The latter asserts that certain properties are a.a.s. satisfied by $G \sim \mathbb{G}(n,p)$ for an appropriate choice of $p$. Lemma~5.3 of~\cite{KKM14} is a deterministic lemma that delivers the graph $\Gamma_\psi$, defined in Proposition~\ref{prop:Yoshi}, in any graph satisfying the properties stated in~\cite[Lemma~3.3]{KKM14}.  			
The aforementioned bound $1 - \Omega \left(n^{-b} \right)$ on the success probability of~\cite[Lemma~3.3]{KKM14} can indeed be attained and this can be traced back to~\cite[Theorem~1.1]{FO05}, which is used in the proof of~\cite[Lemma~3.3]{KKM14}.}   
		\end{remark}

		The argument of~\cite{KKM14} entails an application of the so-called {\em embedding lemma} associated with the K{\L}R-theorem (see~\cite[Lemma~3.9]{KKM14}) to $\Gamma_\psi$; thus ensuring at least one rainbow copy of $H$. As our aim is set on supersaturation of rainbow copies of $H$, we replace~\cite[Lemma~3.9]{KKM14} with an application of the so-called {\em one-sided counting lemma} associated with the K{\L}R-theorem, namely~\cite[Theorem~1.6(i)]{CGSS14}.

		\begin{theorem} \label{thm:KLR}{\em~\cite[Theorem~1.6(i)]{CGSS14}}
			For every $d > 0$ and every strictly $2$-balanced graph $H$, there exist positive constants $\zeta_{\ref{thm:KLR}}, \xi_{\ref{thm:KLR}}$ and an integer $n_0 > 0$ such that the following holds. For every $\eta > 0$, there exists a constant $C_{\ref{thm:KLR}} > 0$ such that $\Gnp$ admits the following property with probability at least $1 - e^{- \Omega_{H,d,\eta}(pn^2)}$, whenever $p := p(n) \geq C_{\ref{thm:KLR}} n^{- 1/m_2(H)}$ and $n \geq n_0$. For every $\ell \geq \eta n$ and $m \geq d pn^2$, and for every subgraph $\Gamma \subseteq \Gnp$ satisfying $\Gamma \in \G(H,\ell,m,p,\zeta_{\ref{thm:KLR}})$, it holds that
			$$
				|\Gamma(H)| \geq \xi_{\ref{thm:KLR}} \left(\frac{m}{n^2}\right)^{e(H)} \binom{n}{v(H)}.
			$$
		\end{theorem}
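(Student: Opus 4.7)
The plan is to deduce the one-sided Counting Lemma from the classical dense Counting Lemma together with the (existence version of the) K{\L}R theorem. Fix $d > 0$ and a strictly $2$-balanced $H$; choose $\zeta$ small relative to $d$ and $\eta$, and let $\xi$ be chosen sufficiently small in the sequel. In the dense baseline (formally $p = 1$), the standard Graph Counting Lemma asserts that any $\zeta$-regular $v(H)$-partite blowup on parts of size $\ell$ with pair densities $d_{ij} \geq d/\eta^2$ contains $(1 \pm o_\zeta(1)) \prod_{ij \in E(H)} d_{ij} \cdot \ell^{v(H)}$ canonical copies of $H$. Substituting $d_{ij} = m/\ell^2$ and using $\ell \geq \eta n$ yields at least $2\xi (m/n^2)^{e(H)} n^{v(H)}$ canonical copies, for an appropriate constant $\xi = \xi(H, d, \eta) > 0$.

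For the sparse setting, I would argue by contradiction using a deletion trick. Suppose that for some realization of $\Gnp$ there is a $\Gamma \in \G(H, \ell, m, p, \zeta)$ with $\Gamma \subseteq \Gnp$ and $k := |\Gamma(H)| < \xi (m/n^2)^{e(H)} n^{v(H)}$. Iteratively remove one edge from each remaining canonical copy of $H$ in $\Gamma$ to obtain $\Gamma' \subseteq \Gamma$ that is $H$-free and differs from $\Gamma$ in at most $k$ edges. The key arithmetic input is that, for strictly $2$-balanced $H$ and $p \geq C n^{-1/m_2(H)}$, one has
\[
\xi \left(\tfrac{m}{n^2}\right)^{e(H)} n^{v(H)} \leq O_{d,\eta}\!\left(\xi \cdot p n^2\right),
\]
as $p^{e(H)-1} n^{v(H)-2} \geq 1$ at the threshold. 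Consequently, for $\xi$ small enough (depending on $d, \eta, H$), each of the $e(H)$ pairs $(V_i, V_j)$ loses only $o(m)$ edges during the deletion, so $\Gamma'$ remains a $(2\zeta, p)$-regular $v(H)$-partite blowup with at least $m/2$ edges per pair, while additionally being $H$-free.

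To close the argument, I would invoke the K{\L}R theorem itself (the sparse \emph{existence} statement proved by Balogh, Morris, and Samotij and by Saxton and Thomason via hypergraph containers, or earlier by Conlon, Gowers, Samotij, and Schacht in \cite{CGSS14}): for strictly $2$-balanced $H$ and $p \geq C' n^{-1/m_2(H)}$, with probability at least $1 - e^{-\Omega_{H, d, \eta}(pn^2)}$, the random graph $\Gnp$ contains no $(2\zeta, p)$-regular $v(H)$-partite blowup of $H$ on parts of size at least $\eta n$ with at least $(d/2) pn^2$ edges per pair which is also $H$-free. The existence of the above $\Gamma'$ is precisely such a structure, and this contradiction completes the argument.

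The main obstacle is the quantitative calibration in the deletion step: one must ensure both that $k = O_\xi(pn^2)$ (so the per-pair deletions stay $o(m)$ and preserve the $(2\zeta, p)$-regular blowup structure) and that the resulting $\Gamma'$ still satisfies the hypotheses required by the existence version of K{\L}R. The bound $k = O_\xi(pn^2)$ relies precisely on strict $2$-balancedness: only under this assumption does $(m/n^2)^{e(H)} n^{v(H)} = \Theta_{d,\eta}(pn^2)$ at the threshold $p = \Theta(n^{-1/m_2(H)})$. For a non-strictly-$2$-balanced $H$, a proper subgraph $F \subsetneq H$ with $m_2(F) > m_2(H)$ becomes the bottleneck, the estimate on $k$ exceeds $pn^2$, the deletion destroys the blowup, and the reduction breaks down.
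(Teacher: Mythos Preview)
The paper does not prove Theorem~\ref{thm:KLR}; it is quoted from~\cite{CGSS14} and invoked as a black box, so there is no in-paper argument to compare against.

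Your deletion reduction has a genuine gap in the ``key arithmetic input''. You claim
\[
\xi \Bigl(\tfrac{m}{n^{2}}\Bigr)^{e(H)} n^{v(H)} \le O_{d,\eta}\bigl(\xi\cdot p n^{2}\bigr),
\]
citing $p^{e(H)-1}n^{v(H)-2}\ge 1$. But that justification points the inequality the \emph{wrong} way: with $m=\Theta_{d,\eta}(pn^{2})$ one has $(m/n^{2})^{e(H)}n^{v(H)}=\Theta_{d,\eta}\bigl(p^{e(H)}n^{v(H)}\bigr)$, and the ratio $p^{e(H)}n^{v(H)}/(pn^{2})=p^{e(H)-1}n^{v(H)-2}$ is $\Theta(1)$ only at $p=\Theta\bigl(n^{-1/m_{2}(H)}\bigr)$ and diverges for any $p$ above the threshold by a growing factor. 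Hence for such $p$ the number $k$ of edges you delete can exceed $m$ by an arbitrarily large factor; you then cannot guarantee that $\Gamma'$ retains $m/2$ edges per pair or $(2\zeta,p)$-regularity, and the appeal to the existence version of K{\L}R collapses.

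Even restricted to $p=\Theta\bigl(n^{-1/m_{2}(H)}\bigr)$ there is a quantifier mismatch: the theorem fixes $\xi=\xi(d,H)$ \emph{before} $\eta$, whereas your bound $k/m=O_{d,\eta}\bigl(\xi\cdot C^{e(H)-1}\bigr)$ involves $C=C(\eta)$ from the existence statement, so $\xi$ would have to depend on $\eta$. The argument in~\cite{CGSS14} avoids both issues by establishing the counting conclusion directly via a transference principle rather than bootstrapping from the existence form.
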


		Proposition~\ref{prop:Yoshi} and Theorem~\ref{thm:KLR} imply the following rainbow supersaturation result for $\Gnp$. 

		\begin{corollary} \label{cor:supersaturation}
			For every real $b > 0$ and every strictly $2$-balanced graph $H$, there exist positive constants $\beta_{\ref{cor:supersaturation}}, C_{\ref{cor:supersaturation}}$ and an integer $n_0 > 0$ such that if $n \geq n_0$ and $p := p(n) \geq C_{\ref{cor:supersaturation}} n^{- 1/m_2(H)}$, then with probability at least $1 - \Omega \left(n^{-b} \right)$, every proper edge colouring of the random graph $\Gnp$ admits at least $\beta_{\ref{cor:supersaturation}} p^{e(H)} \binom{n}{v(H)} \frac{(v(H))!}{|Aut(H)|}$ rainbow copies of $H$.
		\end{corollary}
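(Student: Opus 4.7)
The plan is to chain together Proposition~\ref{prop:Yoshi} and Theorem~\ref{thm:KLR}: the former supplies, for each proper edge-colouring $\psi$ of $\Gnp$, an $(\eps,p)$-regular blown-up subgraph $\Gamma_\psi \subseteq \Gnp$ whose canonical copies of $H$ are all $\psi$-rainbow; the latter then counts those canonical copies from below. The only genuine work is to choose the parameters in the correct order so that both results apply simultaneously.

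First I would fix the parameters. Write $h := v(H)$. Given $H$ and $b$, apply Proposition~\ref{prop:Yoshi} to obtain the constant $\beta_{\ref{prop:Yoshi}}$ (which depends only on $H$ and $b$). Set $\eta := 1/(2h)$ and $d := \beta_{\ref{prop:Yoshi}}/(2 h^2)$, and invoke Theorem~\ref{thm:KLR} with these values of $d$ and $\eta$, obtaining constants $\zeta_{\ref{thm:KLR}}, \xi_{\ref{thm:KLR}}$ and $C_{\ref{thm:KLR}}$. Finally, feed $\eps := \zeta_{\ref{thm:KLR}}$ into Proposition~\ref{prop:Yoshi} and take $C_{\ref{cor:supersaturation}} := \max\{C_{\ref{prop:Yoshi}}, C_{\ref{thm:KLR}}\}$; the lower bound $C_{\ref{prop:Yoshi}} \log n/n$ on $p$ demanded by Proposition~\ref{prop:Yoshi} is automatically met since $m_2(H) \geq 1$ forces $n^{-1/m_2(H)} \gg \log n / n$.

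Next I would union bound the failure events. The ``bad'' event of Proposition~\ref{prop:Yoshi} has probability $O(n^{-b})$, while the ``bad'' event of Theorem~\ref{thm:KLR} has probability $e^{-\Omega(pn^2)}$. Since $p \geq C_{\ref{cor:supersaturation}} n^{-1/m_2(H)}$ and $m_2(H) \geq 1$, we have $pn^2 = \Omega(n)$, so the exponential tail is dominated by $n^{-b}$ and the total failure probability remains of the required order $\Omega(n^{-b})$.

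Conditioning on both good events, I would fix an arbitrary proper colouring $\psi$ of $\Gnp$. Proposition~\ref{prop:Yoshi} produces a subgraph
$$
\Gamma_\psi \in \G\left(H, \floor{n/h}, \floor{\beta_{\ref{prop:Yoshi}} p \floor{n/h}^2}, p, \zeta_{\ref{thm:KLR}}\right)
$$
every canonical copy of which is $\psi$-rainbow. For $n$ sufficiently large, $\ell := \floor{n/h} \geq \eta n$ and $m := \floor{\beta_{\ref{prop:Yoshi}} p \floor{n/h}^2} \geq dpn^2$, so the hypotheses of Theorem~\ref{thm:KLR} are met, and therefore
$$
|\Gamma_\psi(H)| \geq \xi_{\ref{thm:KLR}} (m/n^2)^{e(H)} n^{v(H)} = \Omega_{H,b}\left(p^{e(H)} n^{v(H)}\right).
$$
All of these canonical copies are $\psi$-rainbow. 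Using $n^{v(H)} = \Theta\left(\binom{n}{v(H)} (v(H))!\right)$ and treating $|Aut(H)|$ as a constant, the implicit constants may be absorbed to produce the claimed lower bound $\beta_{\ref{cor:supersaturation}} p^{e(H)} \binom{n}{v(H)} (v(H))!/|Aut(H)|$ for a suitable $\beta_{\ref{cor:supersaturation}} > 0$. No step is delicate; the only real obstacle is bookkeeping the chain of parameter dependencies between the two cited results.
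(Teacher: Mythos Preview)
Your proposal is correct and follows precisely the route the paper intends: the paper states Corollary~\ref{cor:supersaturation} immediately after Proposition~\ref{prop:Yoshi} and Theorem~\ref{thm:KLR} without writing out a proof, simply noting that these two results imply it. Your chain of parameter choices (extracting $\beta_{\ref{prop:Yoshi}}$ first, then setting $d$ and $\eta$, obtaining $\zeta_{\ref{thm:KLR}}$, and feeding $\eps = \zeta_{\ref{thm:KLR}}$ back into Proposition~\ref{prop:Yoshi}) respects the quantifier order in both statements, and your verification that $\ell \geq \eta n$, $m \geq d p n^2$, and $e^{-\Omega(pn^2)} = o(n^{-b})$ is exactly the bookkeeping the paper leaves implicit.
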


		\begin{proof}
			Let $\beta = \beta_{\ref{prop:Yoshi}}$ be the constant given by Proposition~\ref{prop:Yoshi}, corresponding to $b$.
			Set $d = \beta / \left(2v(H)^2 \right)$, and let $\zeta = \zeta_{\ref{thm:KLR}}$ and $\xi = \xi_{\ref{thm:KLR}}$ be the corresponding constants given by Theorem~\ref{thm:KLR}.
			Let $\eta = v(H)/2$, and suppose that $C_{\ref{cor:supersaturation}}$ is chosen to be at least as large as $C_{\ref{thm:KLR}}$, where the latter constant is given by Theorem~\ref{thm:KLR} in correspondence to $\eta$.
			Let $\ell = \floor{n/v(H)}$ and $m = \floor{\beta p\floor{n/v(H)}^2}$.
			 
			By Proposition~\ref{prop:Yoshi}, with probabilisty at least $1 - \Omega(n^{-b})$, given a proper edge-colouring $\psi$ of $\Gnp$, there exists a subgraph $\Gamma_{\psi} \subseteq \Gnp$ such that $\Gamma_{\psi} \in \G(H, \ell, m, p, \zeta)$ and, moreover, every member of $\Gamma_{\psi}$ is rainbow under $\psi$ (here we used the fact that $p \ge \omega(\log n / n)$ which holds e.g.\ since $H$ is strictly $2$-balanced and thus contains a cycle, implying $m_2(H) > 1$). 

			Observe that $\ell \ge \eta n$ and $m \ge dpn^2$. It thus follows by Theorem~\ref{thm:KLR} that, with probability at least $1 - \Omega(n^{-b})$, the subgraph $\Gamma_{\psi}$ satisfies 
			\begin{equation*}
				|\Gamma_{\psi}(H)| 
				\ge \xi \left(\frac{m}{n^2}\right)^{e(H)} \binom{n}{v(H)}
				\ge \xi d^{e(H)} p^{e(H)} \binom{n}{v(H)}.
			\end{equation*}
			Setting $\beta_{\ref{cor:supersaturation}} = \xi d^{e(H)} \frac{|Aut(H)|}{(v(H))!}$ concludes the proof of Corollary~\ref{cor:supersaturation}.
		\end{proof}

		We are now ready to state and prove the main result of this section. 

		\begin{proposition} \label{prop:supersaturation}
			For every real $b > 0$, every strictly $2$-balanced graph $H$, and every $0 < \beta \ll \beta_{\ref{cor:supersaturation}}(b, H)$, there exist $n_0 > 0$ and $C_{\ref{prop:supersaturation}} > 0$ such that the following holds whenever $n \geq n_0$ and $p := p(n) \geq C_{\ref{prop:supersaturation}} n^{- 1/m_2(H)}$. Fix a non-empty family $\sC$ comprised of at most $2^{O(n \log n)}$ sets $\C \subseteq \binom{[n]}{v(H)}$, each of size $|\C| \geq (1 - \beta) \binom{n}{v(H)}$. Then, with probability at least $1 - \Omega \left(n^{-b} \right)$, every proper edge colouring of the random graph $\Gnp$
			admits $\Omega_{H,\beta} \left(p^{e(H)} n^{v(H)} \right)$ rainbow copies of $H$ supported on members of $\C$, for every $\C \in \sC$.
		\end{proposition}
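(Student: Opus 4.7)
The plan is to combine three probabilistic facts about $\Gnp$: the rainbow supersaturation of Corollary~\ref{cor:supersaturation} (which controls rainbow copies of $H$ globally), the restricted counting lower bound of Corollary~\ref{cor:prescribed-copies} (which forces many copies of $H$ to be supported on each $\C \in \sC$), and the upper-tail estimate of Theorem~\ref{thm:Vu} (which bounds the total number of copies of $H$ in $\Gnp$ from above). Throughout, set $Z := p^{e(H)} \binom{n}{v(H)} (v(H))!/|Aut(H)|$, so that $Z = \Ex(X_H)$.

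Let $\alpha, \xi > 0$ be small constants to be chosen later. By Corollary~\ref{cor:supersaturation}, with failure probability $O(n^{-b})$, every proper edge-colouring of $\Gnp$ admits at least $\beta_{\ref{cor:supersaturation}} Z$ rainbow copies of $H$. By Corollary~\ref{cor:prescribed-copies} applied with $\eta := 1 - \beta$ and parameter $\xi$, with failure probability $e^{O(n \log n) - \Omega(p^{e(H)} n^{v(H)})}$, for every $\C \in \sC$ the graph $\Gnp$ contains at least $(1-\xi)(1-\beta) Z$ copies of $H$ supported on the members of $\C$. By Theorem~\ref{thm:Vu} with that $\alpha$, with failure probability $\exp\!\left(-\Omega\!\left(Z^{1/(v(H)-1)}\right)\right)$, the total number of copies of $H$ in $\Gnp$ is at most $(1+\alpha)Z$. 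Since $H$ is strictly $2$-balanced and $p = \Omega(n^{-1/m_2(H)})$, one has $p^{e(H)} n^{v(H)} \gg n \log n$ (as noted immediately after Corollary~\ref{cor:prescribed-copies}), and a short calculation shows $Z^{1/(v(H)-1)} \geq n^{c}$ for some constant $c > 0$; thus the latter two failure probabilities decay super-polynomially in $n$. A union bound then yields all three events simultaneously with failure probability $O(n^{-b})$.

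On this common event, fix any proper colouring $\psi$ and any $\C \in \sC$, and write $\mathcal{D} := \binom{[n]}{v(H)} \setminus \C$. The number of copies of $H$ in $\Gnp$ supported on $\mathcal{D}$ equals the total count minus the count supported on $\C$, and is therefore at most $(1+\alpha) Z - (1-\xi)(1-\beta) Z \leq (\alpha + \xi + \beta) Z$. Since every rainbow copy of $H$ is supported either on $\C$ or on $\mathcal{D}$, the number of rainbow copies supported on $\C$ is at least $\beta_{\ref{cor:supersaturation}} Z - (\alpha + \xi + \beta) Z$. Setting $\alpha := \xi := \beta_{\ref{cor:supersaturation}}/8$ and invoking the hypothesis $\beta \ll \beta_{\ref{cor:supersaturation}}$ makes this quantity at least $(\beta_{\ref{cor:supersaturation}}/2) Z = \Omega_{H,\beta}\!\left(p^{e(H)} \binom{n}{v(H)}\right)$, as required.

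The main technical point is that the failure probability of Corollary~\ref{cor:prescribed-copies}, which already absorbs the union bound over the $2^{O(n\log n)}$ members of $\sC$, must be made negligible compared to $n^{-b}$; this is precisely where the strict $2$-balancedness of $H$ together with $p = \Omega(n^{-1/m_2(H)})$ enters, guaranteeing $p^{e(H)} n^{v(H)} \gg n \log n$ and so overwhelming the $e^{O(n \log n)}$ factor from the union bound.
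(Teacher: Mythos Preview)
Your proof is correct and follows essentially the same route as the paper's: combine Theorem~\ref{thm:Vu}, Corollary~\ref{cor:prescribed-copies}, and Corollary~\ref{cor:supersaturation} to obtain Properties (P.1)--(P.3), then subtract the count of copies outside $\C$ from the global rainbow count. The only cosmetic difference is that the paper takes $\alpha, \xi \ll \beta$ whereas you fix $\alpha = \xi = \beta_{\ref{cor:supersaturation}}/8$; both choices yield the required positive lower bound once $\beta \ll \beta_{\ref{cor:supersaturation}}$.
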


		\begin{proof}
			Let $b, H$, and $\beta$ be as in the statement of the proposition. %let $\beta_{\ref{cor:supersaturation}}(g(B),H)$ be the constant whose existence is guaranteed by Corollary~\ref{cor:supersaturation} given $g(B)$ and $H$. 
			Set auxiliary constants $\eta = 1 - \beta$ and $\alpha, \xi \ll \beta$. Define
			$$
			C_{\ref{prop:supersaturation}} := \max \{C_{\ref{thm:Vu}}(\alpha, H), C_{\ref{cor:prescribed-copies}}(\eta, \xi, H), C_{\ref{cor:supersaturation}}(b, H)\}.
			$$

			Then, for sufficiently large $n$ and for $p := p(n) \geq C_{\ref{prop:supersaturation}} n^{- 1/m_2(H)}$, Theorem~\ref{thm:Vu}, Corollary~\ref{cor:prescribed-copies}, and Corollary~\ref{cor:supersaturation} collectively imply that with probability at least 
			$$
			1 - e^{- \Omega_{\alpha,H} \left( \left(p^{e(H)} n^{v(H)} \right)^{1/(v(H)-1)}\right)} - e^{O(n\log n) - \Omega_{\xi, H, \eta} \left(p^{e(H)}n^{v(H)} \right)} - \Omega(n^{- b}) \geq 1 - \Omega(n^{-b})
			$$
			the following properties are satisfied by $G \sim \Gnp$ simultaneously. 
			\begin{enumerate} [labelindent = \parindent, leftmargin = *, label = \bf (P.\arabic*)]
				\item \label{itm:few-H-copies}
					$G$ admits at most $(1 + \alpha) p^{e(H)} \binom{n}{v(H)} \frac{(v(H))!}{|Aut(H)|}$ copies of $H$.

				\item \label{itm:many-H-copies}
					For every $\C \in \sC$, the graph $G$ admits at least $(1 - \xi)(1 - \beta) p^{e(H)} \binom{n}{v(H)} \frac{(v(H))!}{|Aut(H)|}$ 
					copies of $H$ supported on the members of $\C$.
				
				\item \label{itm:many-rainbow-H-copies}
					Every proper colouring of the edges of $G$ admits at least $\beta_{\ref{cor:supersaturation}}(b,H) p^{e(H)} \binom{n}{v(H)} \frac{(v(H))!}{|Aut(H)|}$ rainbow copies of $H$ in $G$. 
			\end{enumerate}

			Let a proper colouring $\psi$ of the edges of $G \sim \Gnp$ and a member $\C \in \sC$ be fixed. It follows by Properties \ref{itm:few-H-copies} and \ref{itm:many-H-copies} that all but at most 
			$$
			[(1 + \alpha) - (1 - \xi) (1 - \beta)] p^{e(H)} \binom{n}{v(H)} \frac{(v(H))!}{|Aut(H)|} = (\alpha + \xi + \beta - \xi \beta) p^{e(H)} \binom{n}{v(H)} \frac{(v(H))!}{|Aut(H)|}
			$$
			of the copies of $H$ in $G$ are supported on members of $\C$. Then, owing to Property \ref{itm:many-rainbow-H-copies}, $\psi$ admits at least 
			$$
			\left(\beta_{\ref{cor:supersaturation}}(b,H) - \alpha - \xi - \beta + \xi \beta\right)p^{e(H)}\binom{n}{v(H)} \frac{(v(H))!}{|Aut(H)|}
			$$
			rainbow copies of $H$ which are supported on members of $\C$. The claim then follows since $\alpha, \xi, \beta \ll \beta_{\ref{cor:supersaturation}}(b, H)$ hold by assumption.
		\end{proof}

		\begin{remark}
			Proposition~\ref{prop:supersaturation} is somewhat of an {\sl overkill} as far as our needs go regarding rainbow supersaturation. In particular, in the sequel, we apply this result over a single set system of $v(K_r)$-sets and not over $2^{O(n \log n)}$ such set-systems. As the difference in the proof is minuscule, we keep the above formulation. 
		\end{remark}

	\subsection{Number of complete subgraphs containing a prescribed vertex}

		The final property of random graphs that we shall use, is that when $p$ is sufficiently large, the number of copies of $K_r$ that contain any single vertex is significantly smaller than the expected total number of copies of $K_r$ in the random graph. 

		Let a vertex $v \in [n]$ be fixed. Let $A_1, \ldots, A_t$, where $t = \binom{n-1}{r-1}$, be an enumeration of the elements of $\binom{[n] \setminus v}{r-1}$. For every $1 \leq i \leq t$, let $Y_i$ denote the indicator random variable for the event: ``$(\Gnp)[A_i \cup \{v\}]$ is a clique''. Let $X_v = \sum_{i=1}^t Y_i$, that is, $X_v$ counts  the number of copies of $K_r$ in $\Gnp$ containing $v$.  

		\begin{lemma} \label{lem:vertices-copies}
			For any fixed integer $r \geq 3$, the random graph $\Gnp$ a.a.s.\ satisfies the property that $X_v = o(\Ex (X_{K_r}))$ holds for every vertex $v \in [n]$, whenever $p := p(n) = \Omega \left(n^{- 1/m_2(K_r)} \right)$.
		\end{lemma}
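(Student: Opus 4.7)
The plan is to apply Chebyshev's inequality to $X_v$ for each fixed $v \in [n]$ and then union-bound over~$v$. As a preliminary observation, $\Ex(X_v) = \binom{n-1}{r-1} p^{\binom{r}{2}} = (r/n) \cdot \Ex(X_{K_r})$, so it suffices to show that a.a.s.\ every $X_v$ is at most $n^{1 - \eps} \Ex(X_v)$ for some fixed $\eps = \eps(r) > 0$.

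For the variance, I would partition the ordered pairs $(A, A')$ of distinct elements of $\binom{[n] \setminus \{v\}}{r-1}$ according to $k := |A \cap A'| \in \{0, 1, \ldots, r - 2\}$. For $k = 0$ the edge sets $\binom{A \cup \{v\}}{2}$ and $\binom{A' \cup \{v\}}{2}$ are disjoint and $\COV[Y_A, Y_{A'}] = 0$. For $k \geq 1$ these edge sets share exactly $\binom{k+1}{2}$ edges (those spanned by $(A \cap A') \cup \{v\}$), and the standard calculation yields
\[
\VAR[X_v] \leq \Ex(X_v) + O_r\!\left(\sum_{k=1}^{r-2} n^{2(r-1) - k}\, p^{2\binom{r}{2} - \binom{k+1}{2}}\right) = \Ex(X_v)^2 \cdot O_r\!\left(\frac{1}{\Ex(X_v)} + \sum_{k=1}^{r-2} n^{-k}\, p^{-\binom{k+1}{2}}\right).
\]
Using $m_2(K_r) = (r+1)/2$, the hypothesis $p \geq C n^{-1/m_2(K_r)} = C n^{-2/(r+1)}$ gives $n^{-k}\, p^{-\binom{k+1}{2}} = O(n^{k(k-r)/(r+1)})$; for $1 \leq k \leq r-2$ each such exponent is negative, and is maximised (closest to zero) at $k = 1$, equalling $-(r-1)/(r+1)$. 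Since $1/\Ex(X_v)$ is of the same order of magnitude, I conclude
\[
\VAR[X_v]/\Ex(X_v)^2 = O_r\!\left(n^{-(r-1)/(r+1)}\right).
\]

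To finish, I would apply Chebyshev's inequality with threshold $A \cdot \Ex(X_v)$ for $A := n^{1/(r+1) + \eta}$ and some fixed small $\eta > 0$, obtaining $\Pro[X_v > A \cdot \Ex(X_v)] = O(n^{-1 - 2\eta})$. Union-bounding over the $n$ choices of $v$ gives a failure probability of $O(n^{-2\eta}) = o(1)$, and on the complementary event
\[
X_v \leq A \cdot \frac{r}{n}\, \Ex(X_{K_r}) = O\!\left(n^{-r/(r+1) + \eta}\right) \cdot \Ex(X_{K_r}) = o(\Ex(X_{K_r}))
\]
holds for every $v \in [n]$, provided $\eta < r/(r+1)$.

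The main technical step, mild as it is, is the variance bound; it hinges on the elementary inequality $(k+1)/2 < (r+1)/2 = m_2(K_r)$ valid for every $1 \leq k \leq r-2$ (which holds simply because $k < r$), ensuring that each off-diagonal covariance contribution is polynomially small relative to $\Ex(X_v)^2$. In particular, no appeal to a sharper concentration tool such as Theorem~\ref{thm:Vu} is needed, nor do we need to separately handle the range $p \gg n^{-1/m_2(K_r)}$, since larger $p$ only decreases each $n^{-k}\, p^{-\binom{k+1}{2}}$.
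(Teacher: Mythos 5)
Your proposal is correct and follows essentially the same route as the paper: a second-moment (Chebyshev) bound for each fixed $v$ followed by a union bound over the $n$ vertices, with the off-diagonal covariance dominated by the overlap-one pairs (your $k=1$ term $n^{2r-3}p^{2\binom{r}{2}-1}$ is exactly the bound the paper states). The only differences are cosmetic — you spell out the full range of overlaps $k$ and use the deviation threshold $n^{1/(r+1)+\eta}\Ex(X_v)$ where the paper uses $n^{1/r}\Ex(X_v)$ — and both choices close the argument.
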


		\begin{proof}
			Fix an arbitrary vertex $v \in [n]$. Observe that
			\begin{align*}
				\frac{n^{1/r} \Ex (X_v) + \Ex (X_v)}{\Ex (X_{K_r})} \leq \frac{2 n^{1/r} \binom{n-1}{r-1} p^{\binom{r}{2}}}{\binom{n}{r} p^{\binom{r}{2}}} = O_r(n^{1/r - 1}),
			\end{align*}
			implying that $n^{1/r} \Ex (X_v) + \Ex (X_v) = o( \Ex (X_{K_r}))$. Therefore, a union bound over $[n]$ implies that in order to prove the lemma it suffices to show that
			$$
			\Pro [|X_v - \Ex (X_v)| \geq n^{1/r} \Ex (X_v)] = o(1/n).
			$$
			Applying Chebyshev's inequality (see e.g.~\cite{Alon-Spencer, JLR}), we obtain
			$$
			\Pro [|X_v - \Ex (X_v)| \geq n^{1/r} \Ex (X_v)] \leq \frac{\VAR (X_v)}{n^{2/r} (\Ex (X_v))^2}.
			$$
			For the variance of $X_v$ we may write 
			\begin{equation} \label{eq::VarVertex}
			\VAR (X_v) = \sum_{i=1}^t \VAR (Y_i) + 2 \sum_{1 \leq i < j \leq t} \COV(Y_i, Y_j) \leq \Ex (X_v) + O_r \left(n^{2r-3} p^{2 \binom{r}{2} - 1} \right),
			\end{equation}
			where the second term on the right hand side of the above inequality is an upper bound on $\sum_{1 \leq i < j \leq t} \COV(Y_i, Y_j)$; this bound can be proved by observing that the dominant term in this sum arises from pairs of copies of $K_r$ which share $v$ and one other vertex. Then
			$$
			\Pro [|X_v - \Ex (X_v)| \geq n^{1/r} \Ex (X_v)] \leq \frac{1}{n^{2/r} \Ex (X_v)} + \frac{O_r(1)}{n^{2/r} p n} = o(1/n),
			$$
			where the inequality holds by Chebyshev's inequality and by~\eqref{eq::VarVertex}, and the equality follows by a straightforward calculation which uses the assumed lower bound on $p$.
		\end{proof}

\section{Sparse complete bipartite graphs}

	In this section, we consider certain applications of~\eqref{eq:non-rainbow} to sparse bipartite graphs and relatives thereof that arise in subsequent arguments. We start with the following observation.

	\begin{observation}\label{obs:non-rainbow-Krr}
		For every pair of integers $r \geq 1$ and $s \geq 2$, there exists an integer $n_0$ such that for any $n \geq n_0$, every proper colouring of the edges of $K_{r,n}$ admits at most $O_{r,s} (n^{s-1} )$ non-rainbow copies of $K_{r,s}$ (whose partition class of size $r$ coincides with the partition class of size $r$ of $K_{r,n}$). 
	\end{observation}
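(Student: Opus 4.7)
Write $A \discup B$ for the bipartition of $K_{r,n}$ with $|A| = r$ and $|B| = n$, and fix a proper edge-colouring $\psi$ of $K_{r,n}$. A copy of $K_{r,s}$ of the stated form is determined by an $s$-subset $S \subseteq B$, and it fails to be rainbow under $\psi$ if and only if $K[A,S]$ contains two edges sharing a colour. Since $\psi$ is proper, any two such equichromatic edges are automatically vertex-disjoint, so they form a matching of size $2$ in $K_{r,n}$. My plan is to apply the general overcounting template of~\eqref{eq:non-rainbow}, but to exploit the bipartite structure in order to sharpen the resulting bound by a factor of $n$.

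First, I would bound the number of pairs of edges of $K_{r,n}$ that share a colour under $\psi$. Since each colour class is a matching in $K_{r,n}$ and any matching in $K_{r,n}$ has size at most $r$, if $m_c$ denotes the size of the $c$-th colour class then $m_c \leq r$ and $\sum_c m_c = rn$. Consequently, the number of same-coloured pairs satisfies
\[
\sum_c \binom{m_c}{2} \;\leq\; \frac{r-1}{2} \sum_c m_c \;=\; \binom{r}{2} n \;=\; O_r(n).
\]

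Second, I would count extensions: for a fixed same-coloured (and hence independent) pair $\{a_1 b_1, a_2 b_2\}$ with $a_i \in A$ and $b_i \in B$, the $K_{r,s}$-copies of the prescribed form containing both of these edges are in bijection with the $s$-sets $S \subseteq B$ satisfying $S \supseteq \{b_1, b_2\}$, of which there are $\binom{n-2}{s-2} = O_s(n^{s-2})$ once $n \geq s$. Every non-rainbow copy of $K_{r,s}$ contributes at least one such pair to this enumeration, so
\[
\#\{\text{non-rainbow copies of } K_{r,s}\} \;\leq\; O_r(n) \cdot O_s(n^{s-2}) \;=\; O_{r,s}\!\left(\binom{n}{s-1}\right).
\]
I do not anticipate any real obstacle here; the one point to be careful about is the observation that the bipartite constraint $|A|=r$ caps the same-coloured-pair count at $O_r(n)$ rather than the generic $O(n^2)$ one would read off directly from~\eqref{eq:non-rainbow}, and this is precisely the saving of a factor of $n$ needed to reach $\binom{n}{s-1}$ instead of $n \binom{n}{s-1}$.
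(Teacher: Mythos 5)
Your proposal is correct and follows essentially the same route as the paper: both arguments bound the number of same-coloured edge pairs by $O_r(n)$ using the fact that the size-$r$ side caps each colour class (equivalently, limits an edge to at most $r-1$ same-coloured partners), and then multiply by the $\binom{n-2}{s-2}$ ways to complete such a pair to a copy of $K_{r,s}$. Your colour-class-as-matching phrasing even absorbs the $r=1$ case automatically, which the paper handles separately as trivial.
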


	\begin{proof}
		Fix a proper edge colouring $\psi$ of $K_{r,n}$. Note first that, since $\psi$ is proper, our claim is trivial if $r = 1$; we can thus assume that $r \geq 2$. Clearly, we may also assume that $n \geq s$. Any non-rainbow copy of $K_{r,s}$ must admit at least two edges bearing the same colour under $\psi$. The number of ways to pick the first of these two edges is upper bounded by $e(K_{r,n}) = r n$. The number of ways to choose the second of these two edges is at most $r-1$ as the colouring is proper and one of the bipartition classes of the graph has size $r$. The number of ways to complete any such choice of two edges into a copy of $K_{r,s}$ in $K_{r,n}$ is $\binom{n-2}{s-2}$; the latter quantity making sense owing to $n \geq s \geq 2$. We conclude that the number copies of $K_{r,s}$ in $K_{r,n}$ (whose partition class of size $r$ coincides with the partition class of size $r$ of $K_{r,n}$) that are non-rainbow under $\psi$ is at most $r n \cdot (r-1) \cdot \binom{n-2}{s-2} = O_{r,s} (n^{s-1})$.   
	\end{proof}

	For two integers $n >r$, let $\widehat{K}_{r,n}$ denote the graph obtained from $K_{r,n}$ by placing a copy of $K_r$, denoted $K$, on its partition class of size $r$. By the {\em bipartition classes of $\widehat{K}_{r,n}$} we mean the bipartition classes of $K_{r,n}$. Given an integer $s \geq 2$, write $\B = \B(s)$ to denote the family of copies of $K_{r,s}$ in $\widehat{K}_{r,n}$ such that its partition class of size $r$ coincides with $V(K)$. 

	\begin{definition}
	Let $\psi$ be a proper colouring of the edges of $\widehat{K}_{r,n}$ under which $K$ is rainbow. A member $B \in \B$ is said to be {\em compatible} with the colouring of $K$ under $\psi$ (or, for brevity, simply {\em compatible} with $K$) if the following two properties hold
	\begin{enumerate}%[nosep]
		\item $B$ is rainbow under $\psi$;
		\item the sets of colours seen on $E(K)$ and $E(B)$ are disjoint; in this case we say 
		that $K$ and $B$ do not {\em clash} under $\psi$. 
	\end{enumerate}
	\end{definition}

	\begin{observation}\label{obs:compatible}
		Let $r, s$, and $n$ be as in Observation~\ref{obs:non-rainbow-Krr}. If $\psi$ is a proper colouring of the edges of $\widehat{K}_{r,n}$ under which $K$ is rainbow, then all but at most $O_{r,s}(n^{s-1})$ of the members of $\B$ are compatible with $K$. 
	\end{observation}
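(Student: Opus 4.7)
The plan is to decompose the set of members of $\B$ that fail to be compatible with $K$ into two types, bound each type by $O_{r,s}\!\left(\binom{n}{s-1}\right)$, and combine. A member $B\in\B$ is not compatible with $K$ either because $B$ itself is not rainbow under $\psi$, or because $B$ is rainbow but some edge of $B$ shares its colour with some edge of $K$ (i.e., $B$ clashes with $K$). I will bound these two quantities separately.

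For the first type, I would restrict $\psi$ to the bipartite part of $\widehat{K}_{r,n}$, which is a copy of $K_{r,n}$ that inherits a proper edge-colouring from $\psi$. Since the members of $\B$ are precisely the copies of $K_{r,s}$ in $K_{r,n}$ whose partition class of size $r$ coincides with that of $K_{r,n}$, Observation~\ref{obs:non-rainbow-Krr} applies directly and yields that the number of non-rainbow members of $\B$ is at most $O_{r,s}\!\left(\binom{n}{s-1}\right)$.

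For the second type, I would first bound the number of bipartite edges of $\widehat{K}_{r,n}$ whose $\psi$-colour appears among the colours used on $E(K)$. Since $K$ has exactly $\binom{r}{2}$ edges and is rainbow under $\psi$, there are $\binom{r}{2}$ such "bad" colours. For each bad colour $c$, the edges of colour $c$ in the bipartite part are incident to the $r$ vertices of $V(K)$, and since $\psi$ is proper each vertex of $V(K)$ is incident to at most one such edge; hence there are at most $r$ bipartite edges of colour $c$. Altogether there are at most $r\binom{r}{2}=O_r(1)$ bad bipartite edges. Each bad edge $uv$ (with $u\in V(K)$ and $v$ on the large side) is contained in exactly $\binom{n-1}{s-1}$ members of $\B$, since the partition class of size $r$ of such a $B$ must be $V(K)$, while the partition class of size $s$ must contain $v$ and the remaining $s-1$ vertices may be chosen freely from the large side minus $v$. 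Multiplying, the number of members of $\B$ that clash with $K$ is at most $r\binom{r}{2}\binom{n-1}{s-1}=O_{r,s}\!\left(\binom{n}{s-1}\right)$.

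Adding the two bounds yields the claim. No step is a real obstacle here; the only thing to be mindful of is distinguishing the two failure modes cleanly, and using properness together with the fact that one side of the bipartition has the small constant size $r$, which is exactly what made Observation~\ref{obs:non-rainbow-Krr} nontrivial as well.
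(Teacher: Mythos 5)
Your proof is correct and follows essentially the same route as the paper: invoke Observation~\ref{obs:non-rainbow-Krr} for the non-rainbow members of $\B$, then bound the clashing members by noting that only $O_r(1)$ bipartite edges can carry a colour of $E(K)$ (by properness and the constant size of the small class), each forcing one vertex of the size-$s$ class and hence at most $\binom{n-1}{s-1}$ completions. Your accounting of the bad edges is in fact slightly more explicit than the paper's, but the argument is the same.
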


	\begin{proof}
		In view of Observation~\ref{obs:non-rainbow-Krr}, it suffices to prove that there are $O_{r,s}(n^{s-1})$ members of $\B$ that clash with $K$. Let $c$ be a colour appearing on some edge of $K$, and let $X_c$ be the set of vertices in the partition class of size $n$ that send a $c$-coloured edge to $K$; then $|X_c| \le r-2$. Note that every member of $\B$ that clashes with $K$ contains a vertex in $X_c$ for some colour $c$ that appears on $K$. As the number of members in $\B$ that contain a given vertex is at most $\binom{n}{s-1}$, we conclude that the number of members of $\B$ that clash with $K$ is at most $e(K) \cdot (r-2) \cdot  \binom{n}{s-1} = O_{r,s}(n^{s-1})$.
	\end{proof}

\section{Proof of Theorem~\ref{thm:main}: $1$-statement} \label{sec::mainproof}

	In this section, we prove the $1$-statement associated with Theorem~\ref{thm:main}. The proof of the relevant $0$-statement is detailed in Section~\ref{sec:our-results} prior to the statement of the theorem. The main result of this section reads as follows. 

	\begin{proposition}\label{prop:even-1-statement}
		For every real number $d > 0$ and integer $r \geq 3$, the property $\sG_{d,n} \cup \mathbb{G}(n,p) \rainbow K_{2r}$ holds a.a.s., whenever $p := p(n) = \omega(n^{-1/m_2(K_r)})$. In fact, for values of $p$ in this range, $\sG_{d,n} \cup \mathbb{G}(n,p)$ a.a.s.\ has the property that every proper colouring of its edges gives rise to $\Omega\left(p^{2\binom{r}{2}}n^{2r}\right)$ rainbow copies of $K_{2r}$. 
	\end{proposition}

	\begin{proof}
		Fix $d > 0$, an integer $r \geq 3$, and $G \in \sG_{d,n}$, where throughout we assume $n$ to be sufficiently large. Set auxiliary constants 
		\begin{equation} \label{eq::Bandg}
			r \ll b \mand 0<\eps \ll \gamma \ll \beta_{\ref{cor:supersaturation}}(b, K_r).
		\end{equation}

		By a standard application of the (so-called {\sl dense}) regularity lemma~\cite{Szemeredi78} (see also~\cite{KS96}), we may assume, without loss of generality, that $G$ is a bipartite $\eps$-regular graph with edge-density at least $d'$ for some constant $\eps \ll d' \ll \gamma$, such that $V(G) = W \discup U$, and $|U| = |W| = m = \Omega_{\eps}(n)$. Let 
		\begin{equation}\label{eq::CW-def}
			\C_W = \left\{X \in \binom{W}{r}: |N_X| = \Omega_{d',\eps}(m)\right\},
		\end{equation}
		where $N_X := \{u \in U : uv \in E(G) \textrm{ for every } v \in X\}$ is the common neighbourhood of $X$ in $G$. Regularity then implies that 
		\begin{equation} \label{eq::CW}
		|\C_W| \geq (1 - \gamma) \binom{m}{r}.
		\end{equation}

		We expose the random edges added to $G$ in three steps. Firstly, the random edges with both endpoints in $W$ are exposed; secondly, the random edges with both endpoints in $U$ are exposed; thirdly and finally, all other random edges are exposed. Note, however, that the third step is a mere formality as, indeed, the edges exposed in this step serve no role in the formation of any eventual rainbow copy of $K_{2r}$ produced by our argument. 

		\begin{claim}\label{clm:round1-props}
			Asymptotically almost surely $G_1 \sim \Gnp[W]$ satisfies the following properties simultaneously. 
			\begin{enumerate}[labelindent = \parindent, leftmargin = *, label = \bf (Q.\arabic*)]
				\item \label{itm:few-cliques-vertex}
					Every vertex $w \in W$ lies in at most $o \left(p^{\binom{r}{2}}m^r \right)$ copies of $K_r$ in $G_1$.

				\item \label{itm:few-cliques-total}
					$G_1$ has $O \left(p^{\binom{r}{2}} m^r \right)$ copies of $K_r$.
				
				\item \label{itm:many-cliques-CW}
					Every proper colouring $\psi$ of the edges of $G_1$ admits $\Omega \left(p^{\binom{r}{2}} m^r \right)$ rainbow copies of $K_r$ supported on members of $\C_W$. 
			\end{enumerate}
		\end{claim}

		\begin{proof}
			It suffices to prove that $G_1$ satisfies each of the aforementioned properties asymptotically almost surely. Property \ref{itm:few-cliques-vertex} follows from Lemma~\ref{lem:vertices-copies} applied to $G_1$, and Property \ref{itm:few-cliques-total} follows from Theorem~\ref{thm:Vu} applied to $G_1$. 

			Property \ref{itm:many-cliques-CW} follows from Proposition~\ref{prop:supersaturation} applied with $b$ as in~\eqref{eq::Bandg}, $\beta = \gamma$, and $\sC = \{\C_W\}$. Indeed, the latter asserts that $G_1$ a.a.s.\ satisfies the property that any proper colouring of the edges of $G_1$ admits $\Omega \left(p^{\binom{r}{2}} m^r \right)$ rainbow copies of $K_r$ supported on the members of $\C_W$. 
		\end{proof}

		Fix a graph $G_1 \sim (\Gnp)[W]$ satisfying properties \ref{itm:few-cliques-vertex}, \ref{itm:few-cliques-total} and \ref{itm:many-cliques-CW}, and denote $H_1 := G_1 \cup G$. We establish an additional property of $H_1$ deterministically, so to speak; see~\eqref{eq:many-rainbow-bip}. 

		A member $X \in \C_W$ satisfying $H_1[X] \cong K_r$ is termed {\em relevant}. For a relevant $X$, let $\widehat{B}_X$ denote the copy of $\widehat{K}_{r,|N_X|}$ in $H_1$ whose $r$-part is $X$ and whose other part is $N_X$. Given a proper colouring $\psi$ of the edges of $H_1$, a subgraph $K \subseteq H_1$ appearing rainbow under $\psi$ is called $\psi$-{\em rainbow}; similarly, if $H_1[Y]$ is a $\psi$-rainbow clique, then we say that $Y$ is $\psi$-rainbow.

		For a proper edge-colouring $\psi$ of $H_1$ and a relevant $\psi$-rainbow $X \in \C_W$, set 
		$$
		\B_{X,\psi} := \left\{Y \in \binom{N_X}{r}: H_1[X] \cup  H_1[X,Y]\; \text{is $\psi$-rainbow}\right\}. 
		$$
		It follows by Observation~\ref{obs:compatible} that 
		\begin{equation} \label{eq:many-rainbow-bip}
				|\B_{X, \psi}| = \left(1 - O_r\left(\frac{1}{|N_X|}\right) \right)\binom{|N_X|}{r} \geq \left(1- O_{r,d',\eps} \left(\frac{1}{m} \right) \right)\binom{|N_X|}{r}, 
		\end{equation}
		%\begin{align} \label{eq:many-rainbow-bip}
		%	\begin{split}
		%		|\B_{X,\psi}| 
		%		& = \left(1 - O\left(\frac{1}{|N_X|}\right) \right)\binom{|N_X|}{r} \\
		%		& \ge \left(1- O_{r,d',\eps} \left(\frac{1}{m} \right) \right)\binom{|N_X|}{r}, 
		%	\end{split}
		%\end{align}
		where for the last inequality we appeal to $m$ being sufficiently large and $|N_X| = \Omega_{d',\eps}(m)$.

		\bigskip

		The next claim addresses the properties of the distribution $H_1 \cup (\Gnp)[U]$; the notation of subgraphs being $\psi$-rainbow extends naturally to proper edge-colourings $\psi$ of the latter and subgraphs thereof. 

		\begin{claim} \label{clm:round2-props}
			Let $G_1 \sim (\Gnp)[W]$ satisfying properties \ref{itm:few-cliques-vertex},  \ref{itm:few-cliques-total} and \ref{itm:many-cliques-CW} be fixed, and let $G_2 \sim (\Gnp)[U]$.  
			Then, a.a.s.\ the following properties hold simultaneously for every proper colouring $\psi$ of the edges of $G \cup G_1 \cup G_2$ and every relevant $\psi$-rainbow $X \in \C_W$. Denote $\ell := |N_X|$, $\beta := \beta_{\ref{prop:Yoshi}}(K_r,b)$, $k := \beta \cdot p \ell^2/r^2$, $\mu := \zeta_{\ref{thm:KLR}}(\beta/2, K_r)$, and $\alpha := \left(\frac{\beta}{2 r^2} \right)^{\binom{r}{2}} \xi_{\ref{thm:KLR}}(\beta/(2r^2), K_r)$ (note that some of these parameters depend on $X$). Then
		\begin{enumerate}[labelindent = \parindent, leftmargin = *, label = \bf (Q.\arabic*), start = 4]
			\item \label{itm:few-cliques-U} 
				$G_2$ admits $\Theta \left(p^{\binom{r}{2}} m^r \right)$ copies of $K_{r}$.
			
			\item \label{itm:gamma-exists}
				There is a subgraph $\Gamma_{X,\psi} \subseteq G_2[N_X]$ satisfying  
				\begin{equation}\label{eq:KLR-rainbow-graph}
					\Gamma_{X,\psi} \in 
					\G\left(K_r, \floor{\ell/r}, k, p, \mu/2\right),
				\end{equation}
				such that every copy of $K_r$ in $\Gamma_{X,\psi}$ is $\psi$-rainbow. 

			\item \label{itm:many-cliques-gamma}
				Every subgraph $\Gamma \subseteq G_2[N_X]$ satisfying 
				\begin{equation}\label{eq:KLR-counting-prop}
					\Gamma \in 
					\G\left(K_r,\floor{\ell/r},k/2,p,\mu \right),
				\end{equation}
				admits at least $\alpha p^{\binom{r}{2}}\binom{\ell}{r}$ copies of $K_r$.

			\item \label{itm:many-cliques-BW}
				All but at most $\frac{\alpha}{2} p^{\binom{r}{2}}
				\binom{\ell}{r}$ copies of $K_r$ in $G_2[N_X]$ are supported on 
				members of $\B_{X,\psi}$.
		\end{enumerate}
		\end{claim}

		\begin{proof}
			It suffices to prove that a.a.s.\ each of the above four properties holds for every proper colouring $\psi$ and every relevant $\psi$-rainbow $X$. Property \ref{itm:few-cliques-U} follows from Theorem~\ref{thm:Vu} and Lemma~\ref{lem:prescribed-copies} (with $\C = \binom{U}{r}$ and $\eta = 1$). \smallskip

			Property~\ref{itm:gamma-exists} follows from Proposition~\ref{prop:Yoshi}. Indeed, by the assumptions on $p$, we have $p = \omega(\log n / n)$. Consequently,  Proposition~\ref{prop:Yoshi}, applied to the graph $G_2[N_X]$ with $b$ as in~\eqref{eq::Bandg} and $\eps$ (from Proposition~\ref{prop:Yoshi}) set to $\mu/2$, implies that given $X \in \C_W$ there exists a graph $\Gamma_{X,\psi}$ as specified in~\eqref{eq:KLR-rainbow-graph}, for every proper colouring $\psi$, with probability at least $1 - \Omega_{\eps,r}(n^{-b})$. Consequently, the probability that Property~\ref{itm:gamma-exists} fails for some $\psi$ or $X$ is at most $O(n^r \cdot n^{-b}) = o(1)$, where the equality holds since $r \ll b$ by~\eqref{eq::Bandg}.

			\smallskip

			Property~\ref{itm:many-cliques-gamma} is a consequence of the K{\L}R-theorem, namely, Theorem~\ref{thm:KLR}. By Theorem~\ref{thm:KLR}, applied to $G_2[N_X]$ with $H = K_r$ and with $d$ (per that theorem) set to $\beta/(2 r^2)$ (recall that $\beta = \beta_{\ref{prop:Yoshi}}(K_r, b)$), it follows that with probability at least $1 - e^{- \Omega_{r,\eps,d}(p n^2)}$, every $\Gamma \subseteq G_2[N_X]$ such that $\Gamma \in \G\left(K_r, \floor{\ell/r}, k/2, p, \mu \right)$, satisfies the stipulated counting property associated with the K{\L}R-theorem. The probability that Property~\ref{itm:many-cliques-gamma} fails for some $X \in \C_W$ is at most $O(n^r \cdot e^{-\Omega(pn^2)}) = o(1)$, where the equality holds by the assumed lower bound on $p$.    

			\bigskip

			We now prepare for the proof that Property~\ref{itm:many-cliques-BW} holds asymptotically almost surely. By Theorem~\ref{thm:Vu}, the number of copies of $K_r$ in $G_2[N_X]$ is at most
			\begin{equation} \label{eq:Vu-Kr-psi-X}
				(1 + \alpha/4) \cdot p^{\binom{r}{2}} \binom{\ell}{r},
			\end{equation}
			with probability at least $1 - \exp\left( -\Omega\left(\Big( p^{\binom{r}{2}} n^r\Big)^{\frac{1}{r-1}}\right)\right) \geq 1 - \exp\left(-\Omega\left( n^{\frac{3}{2(r-1)}}\right)\right)$; this inequality holds since $p^{\binom{r}{2}} n^r = \omega(n^2 p) = \omega(n^{3/2})$, where here the first equality is due to the assumed lower bound on $p$ and the second equality holds since $r \geq 3$. Thus, the probability that the number of copies of $K_r$ in $N_X$ is larger than the expression appearing in~\eqref{eq:Vu-Kr-psi-X} for some $X$ is at most $n^r \cdot \exp\left(-\Omega\left( n^{\frac{3}{2(r-1)}}\right)\right) = o(1)$.

			\smallskip

			Gearing up towards an application of Corollary~\ref{cor:prescribed-copies}, let $\widehat{B}_X$ denote the copy of $\widehat{K}_{r, |N_X|}$, with its part of size $r$ being $X$ and its other part being $N_X$. Consider the set 
			$$
				\B_X :=  \{\B_{X, \psi}: \; \text{$\psi$ is a proper edge-colouring of $\widehat{B}_X$ and $X$ is $\psi$-rainbow}\}.
			$$
			As the number of proper colourings of $\widehat{B}_X$ (up to relabeling of the colours) is at most $e(\widehat{B}_X)^{e(\widehat{B}_X)} = 2^{O(n\log n)}$, we have $|\B_X| = 2^{O(n\log n)}$ for every $X \in \C_W$.
			Apply Corollary~\ref{cor:prescribed-copies} to $G_2[N_X]$ with $\sC = \B_X$ along with $\alpha/8$ (as set above) and $\eta = 1- \alpha/8$. Using~\eqref{eq:many-rainbow-bip}, the corollary asserts that, for every $\B_{X, \psi} \in \B_X$, the number of copies of $K_r$ supported on members of $\B_{X, \psi}$ is at least 
			\begin{equation}\label{eq:xi-eta-cnt}
				(1-\alpha/8)^2 \cdot p^{\binom{r}{2}}\binom{\ell}{r} \geq
				(1-\alpha/4) \cdot p^{\binom{r}{2}}\binom{\ell}{r},
			\end{equation}
			with probability at least $1 - \exp\left(O(n\log n) - \Omega\Big(p^{\binom{r}{2}n^r}\Big)\right) = 1 - \exp\left(-\Omega(n^{3/2})\right)$.
			It follows that, with probability at least $1 - n^r \exp\left(-\Omega(n^{3/2})\right) = 1 - o(1)$, the number of copies of $K_r$ supported on members of $\B_{X, \psi}$ is at least the number appearing in~\eqref{eq:xi-eta-cnt} for every $X \in \C_W$ and every proper edge-colouring $\psi$.

			\smallskip 

			With the above two properties of $G_2$ established, Property~\ref{itm:many-cliques-BW} follows deterministically, so to speak. Indeed, by~\eqref{eq:Vu-Kr-psi-X} and~\eqref{eq:xi-eta-cnt}, a.a.s.\ there are at most 
			$$
				\frac{\alpha}{2} \cdot p^{\binom{r}{2}}\binom{\ell}{r}
			$$
			copies of $K_r$ in $G_2[N_X]$ not supported on members of $\B_{X,\psi}$, as required for Property~\ref{itm:many-cliques-BW}. 
		\end{proof}

		Let $H \sim G \cup \Gnp$ satisfying Properties \ref{itm:few-cliques-vertex}--\ref{itm:many-cliques-BW} be fixed. Let $\psi$, a proper colouring of the edges of $H$, be fixed as well. We prove that $H$ admits at least $\Omega\left(p^{2\binom{r}{2}}n^{2r}\right)$ $\psi$-rainbow copies of $K_{2r}$. To this end, let $\R_W \subseteq \C_W$ be the collection of members of $\C_W$ inducing a $\psi$-rainbow copy of $K_r$, and let $\R_U \subseteq \binom{U}{r}$ be the collection of $\psi$-rainbow copies of $K_r$ supported on $H[U]$. Define an auxiliary bipartite graph $\F$ whose vertex bipartition is given by $(\R_W,\R_U)$ with $X \in \R_W$ and $Y \in \R_U$ forming an edge of $\F$ if and only if
		\begin{description}
			\item [(i)] 
				$Y \in \B_{X,\psi}$, that is, $H[X \cup Y] \cong K_{2r}$ and $H[X] \cup H[X,Y]$ is $\psi$-rainbow; and 
			\item [(ii)]
				$\psi(E_{H}(Y)) \cap \psi(E_{H}(X)) = \emptyset$, i.e., there is no {\sl clash} between the set of colours seen on $H[X]$ and that seen on $H[Y]$.
		\end{description}
		
		A pair $\{X,Y\}$, as above, forming an edge of $\F$, forms a copy of $K_{2r}$ in $H$ such that $H[X] \cup H[X,Y]$ is $\psi$-rainbow and $H[X] \cup H[Y]$ is $\psi$-rainbow. Still, such a copy of $K_{2r}$ may not be $\psi$-rainbow as a clash between the colours seen on $H[Y]$ and $H[X,Y]$ is still possible; such clashes are dealt with below.  

		Combined with the following claim, Properties~\ref{itm:few-cliques-total}, \ref{itm:many-cliques-CW} and \ref{itm:few-cliques-U} imply that $|\R_W|, |\R_U| = \Theta\left(p^{\binom{r}{2}}n^r\right)$ and that $\F$ is {\sl dense}. 

		\begin{claim}\label{eq:F-dense}
			For every $X \in \R_W$, $\deg_{\F}(X) = \Omega\left(p^{\binom{r}{2}}n^r\right)$ holds.
		\end{claim}

		\begin{proof}
			Fix $X \in \R_W$ and let $\Gamma_{X,\psi} \subseteq H[N_X]$ be the subgraph whose existence is guaranteed by Property~\ref{itm:gamma-exists}. For a colour $c \in \psi(E_{H}(X))$ seen on some edge of $X$, let $M_c \subseteq E(\Gamma_{X,\psi})$ be the matching in $\Gamma_{X,\psi}$ induced by the colour $c$. Standard regularity arguments, assert that the graph 
			$$
			\Gamma'_{X,\psi} := \Gamma_{X,\psi} \setminus \bigcup_{c \in \psi(E_{H}(X))} M_c,
			$$ 
			obtained from $\Gamma_{X,\psi}$ by removing all edges coloured using a colour seen on the edges of $H[X]$, is a member of the graph family specified in~\eqref{eq:KLR-counting-prop}. Indeed, 
			$
			|\cup_{c \in \psi(E_{\Gamma}(X))} M_c| = O_r(n) = o(pn^2)
			$,
			where the last equality holds since $p = \omega(n^{-1})$. Consequently, the inter-cluster density of the K{\L}R-graph $\Gamma_{X,\psi}$ as well as its regularity parameter are worsened by a factor of at most $2$, say. 

			By Property~\ref{itm:gamma-exists}, every copy of $K_r$ in $\Gamma_{X,\psi}$ is $\psi$-rainbow. At least $\alpha p^{\binom{r}{2}}\binom{|N_X|}{r}$ of these $\psi$-rainbow copies of $K_r$ are retained in $\Gamma'_{X,\psi}$ by Property~\ref{itm:many-cliques-gamma}. Since $|N_X| = \Omega_{d', \eps}(n)$ and, by Property~\ref{itm:many-cliques-BW}, all but at most $\frac{\alpha}{2} p^{\binom{r}{2}} \binom{|N_X|}{r}$ 
			copies of $K_r$ in $H[N_X]$ are supported on the members of $\B_{X,\psi}$, the claim follows.
		\end{proof}

		It remains to prove that there are $\Omega\left(p^{2\binom{r}{2}}n^{2r}\right)$ edges $\{X,Y\}$ of $\F$ with $X \in \R_W$ and $Y \in \R_U$ such that $H[Y]$ and $H[X,Y]$ do not clash, as each such pair gives rise to a distinct $\psi$-rainbow copy of $K_{2r}$. Let $\F'$ be the spanning subgraph of $\F$ whose edges are pairs $\{X, Y\} \in E(\F)$ such that $H[Y]$ and $H[X, Y]$ do clash.
		We claim that 
		\begin{equation}\label{eq:k_Y}
			\deg_{\F'}(Y) = o\left(p^{\binom{r}{2}} n^r\right)
		\end{equation}
		for every $Y \in \R_U$. To prove~\eqref{eq:k_Y}, fix $Y \in \R_U$ and let $S_Y \subseteq W$ be the set of vertices $s \in W$ such that $H[Y]$ and $H[\{s\}, Y]$ clash. As $\psi$ is proper, each edge $e$ in $H[Y]$ gives rise to at most $r-2$ edges from $Y$ to $W$ of colour $\psi(e)$. It follows that $|S_Y| \le (r-2)\binom{r}{2}$. Note that every $X \in \R_W$ such that $\{X, Y\} \in E(\F')$ intersects $S_Y$. Property~\ref{itm:few-cliques-vertex} then implies that 
		$\deg_{\F'}(Y) = o\left(|S_Y| p^{\binom{r}{2}} n^r\right) = o\left(p^{\binom{r}{2}} n^r\right)$ and~\eqref{eq:k_Y} follows. Since $|\R_U| = \Theta\left(p^{\binom{r}{2}} n^{r}\right)$, we conclude that $e(\F') = o\left(p^{2\binom{r}{2}}n^{2r}\right)$. Since $e(\F) = \Omega\left(p^{2\binom{r}{2}}n^{2r}\right)$ holds by Claim~\ref{eq:F-dense} and by the established fact that $|\R_W| = \Theta\left(p^{\binom{r}{2}} n^{r}\right)$, we deduce that
		$$
			|E(\F \setminus \F')| = \Omega\left(p^{2\binom{r}{2}}n^{2r}\right),
		$$
		i.e.,\ the number of pairs $\{X,Y\} \in E(\F)$ giving rise to a $\psi$-rainbow $K_{2r}$ is $\Omega\left(p^{2\binom{r}{2}}n^{2r}\right)$.
		This concludes our proof of Theorem~\ref{thm:main}.
	\end{proof}
\section{Rainbow odd cycles} \label{sec:odd-cylces}

	In this section we prove Proposition~\ref{thm:main:odd-cycles}.

	\begin{proof}[Proof of Proposition~\ref{thm:main:odd-cycles}]
		To see the $0$-statement, fix some $\ell \geq 2$ and $d \leq 1/2$, and let $G$ be a bipartite graph on $n$ vertices with edge-density at least $d$. Since $G$ is bipartite, any copy of $C_{2\ell-1}$ in $\Gamma \sim G \cup \Gnp$ must contain some edge of $\Gnp$. However, $\Gnp$ is a.a.s.\ empty whenever $p = o \left(n^{-2} \right)$. In particular, a.a.s.\ no edge-colouring of $\Gamma$ can yield a rainbow $C_{2\ell-1}$ for any $\ell \geq 2$.

		Proceeding to the $1$-statement, let $\ell \geq 2$ and $d > 0$ be fixed, and let $G \in \sG_{d,n}$ be given. 
		Apply Lemma~\ref{lem:dependent-random-choice} with parameters $a := d^{2\ell} \cdot n - 1$, $\overline{d} := dn, r := \ell$, and $m := \sqrt{n}$. Note that
		\[
			\binom{n}{r} \left(\frac{m}{n}\right)^t 
			\le n^{\ell} \cdot n^{-2\ell/2} = 1
		\]
		holds for $t = 2\ell$. In particular, the inequality in Lemma~\ref{lem:dependent-random-choice} holds for this choice of parameters. It follows that there exists a set $U$ of at least $d^{2\ell} \cdot n - 1$ vertices such that every $\ell$ vertices in $U$ have at least $\sqrt{n}$ common neighbours. The probability that there are no edges of $\Gnp$ in $U$ is 
		\[
			(1 - p)^{\binom{|U|}{2}} \le e^{-\omega(n^{-2} \cdot n^2)} = o(1).
		\]
		In other words, a.a.s.\ there is an edge of $\Gnp$ with both ends in $U$; denote such an edge by $xy$, and let $X$ be a subset of $U$ of size $\ell$ that contains $x$ and $y$. By the assumption on $U$, the set $Z$ of common neighbours of $X$ has size at least $\sqrt{n}$. We claim that $G[X, Z] \cup \{xy\}$ contains a rainbow $C_{2\ell-1}$ for every proper colouring $\psi$. 

		Let $A = \{z \in Z : \exists w \in X \textrm { such that } \psi(zw) = \psi(xy)\}$. Note that $|A| \leq \ell - 2$ as $\psi$ is a proper edge-colouring. Let $Z' = Z \setminus A$. Now recall that the number of non-rainbow copies of $K_{\ell,\ell-1}$ in $G[X, Z']$, with $X$ being their $\ell$-part is at most $O(|Z'|^{\ell-2})$ (see Observation~\ref{obs:non-rainbow-Krr}). As there are $\binom{|Z'|}{\ell-1}$ such copies of $K_{\ell,\ell-1}$ in $G[X, Z']$, it follows that there is a rainbow copy of $K_{\ell,\ell-1}$ in $G[X, Z']$, whose vertex-set is $X \cup Z''$, where $Z''$ is a subset of $Z'$ of size $\ell-1$. Consider the graph $H$ with vertices $X \cup Z''$ and edge-set $(X \times Z'') \cup \{xy\}$. It is a rainbow subgraph of $G$ (by choice of $Z'$ and $Z''$) which clearly contains a copy of $C_{2\ell-1}$. This concludes the proof of the proposition.
	\end{proof}

\subsection*{Acknowledgements}
	We would like to thank the anonymous referees for their many insightful and helpful comments.

\bibliographystyle{amsplain}
\bibliography{lit}

\end{document}